\documentclass[oneside, 11pt]{article}
\usepackage{setspace}
\usepackage{titlesec}
\usepackage[utf8]{inputenc}
\usepackage{tabularx}	
\usepackage{etoolbox}
\usepackage{mathtools}
\usepackage{amsmath}
\usepackage{amssymb} 
\usepackage{amsfonts}
\usepackage{mathrsfs} 
\usepackage{amsthm,pifont}
\usepackage{graphicx}
\usepackage{bbm}
\usepackage[colorlinks=true, pdfstartview=FitV, linkcolor=blue,
citecolor=blue, urlcolor=blue, pagebackref=false]{hyperref}
\usepackage{cleveref}
\usepackage[nottoc,numbib]{tocbibind}
\usepackage{xcolor}
\usepackage{tikz}
\usepackage{cite}
\usepackage{braket}
\usepackage{leftidx,tensor}
\usepackage{dsfont}
\usepackage{calc}
\usepackage{caption}
\usepackage{subcaption}
\usepackage{color}
\usepackage[printonlyused,nohyperlinks]{acronym}
\usetikzlibrary{decorations.markings}
\usetikzlibrary{shapes.geometric}
\usetikzlibrary{patterns}
\tikzstyle{vertex}=[circle, draw, inner sep=0pt, minimum size=6pt]

\definecolor{superlightgray}{RGB}{235,235,235}
\definecolor{wellgray}{RGB}{170,170,170}
\definecolor{lightergray}{RGB}{225,225,225}

\usepackage[utf8]{inputenc}
\usepackage[english]{babel}
\usepackage{enumitem}

\allowdisplaybreaks

\newtheorem{theorem}{Theorem}[section]
\makeatletter
\patchcmd{\ttlh@hang}{\parindent\z@}{\parindent\z@\leavevmode}{}{}
\patchcmd{\ttlh@hang}{\noindent}{}{}{}
\makeatother
\titleformat*{\section}{\large\bfseries}
\titleformat*{\subsection}{\small\bfseries}
\titleformat*{\subsubsection}{\small\bfseries}
\titleformat*{\paragraph}{\small\bfseries}
\titleformat*{\subparagraph}{\small\bfseries}

\newcommand{\N}{\mathbb{N}}
\newcommand{\R}{\mathbb{R}}
\newcommand{\Z}{\mathbb{Z}}
\newcommand{\E}{\mathbb{E}}
\newcommand{\p}{\mathbb{P}}
\newcommand{\md}{\ensuremath{\mathrm{d}}}
\newcommand{\dxp}{\theta}

\newcommand{\eps}{\varepsilon}
\newcommand{\dia}{\text{Diam}}
\newcommand{\rad}{\text{Rad}}

\newcommand{\mz}{\mathbf{0}}
\newcommand{\cA}{\mathcal{A}}
\newcommand{\cB}{\mathcal{B}}

\newcommand{\cH}{\mathcal{H}}
\newcommand{\cN}{\mathcal{N}}

\newcommand{\cS}{\mathcal{S}}
\newcommand{\cX}{\mathcal{X}}
\newcommand{\cW}{\mathcal{W}}

\newtheorem{lemma}[theorem]{Lemma}
\newtheorem{remark}[theorem]{Remark}
\newtheorem{proposition}[theorem]{Proposition}
\newtheorem{definition}[theorem]{Definition}
\newtheorem{corollary}[theorem]{Corollary}
\newtheorem{claim}[theorem]{Claim}

\usepackage{geometry}
\geometry{a4paper, top=30mm, left=30mm, right=30mm, bottom=30mm,
	headsep=10mm, footskip=12mm}

\begin{document}
	
\title{The polynomial growth of the infinite long-range percolation cluster}

\author{Johannes B\"aumler \footnote{E-Mail: \href{mailto:jbaeumler@math.ucla.edu}{jbaeumler@math.ucla.edu} \textsc{Department of Mathematics, University of California, Los Angeles}. } }

\maketitle
	
	\begin{center}
		\parbox{13cm}{ \textbf{Abstract.} We study independent long-range percolation on $\mathbb{Z}^d$ where the nearest-neighbor edges are always open and the probability that two vertices $x,y$ with $\|x-y\|>1$ are connected by an edge is proportional to $\frac{\beta}{\|x-y\|^s}$, where $\beta>0$ and $s> 0$ are parameters. We show that the ball of radius $k$ centered at the origin in the graph metric grows polynomially if and only if $s\geq 2d$. For the critical case $s=2d$, we show that the volume growth exponent is inversely proportional to the distance growth exponent. Furthermore, we provide sharp upper and lower bounds on the probability that the origin and $ne_1$ are connected by a path of length $k$ in the critical case $s=2d$. We use these results to determine the Hausdorff dimension of the critical long-range percolation metric that was recently constructed by Ding, Fan, and Huang [14].}
	\end{center}

\let\thefootnote\relax\footnotetext{{\sl MSC Class}: 05C12, 60K35, 82B27, 82B43}
\let\thefootnote\relax\footnotetext{{\sl Keywords}: Long-range percolation, graph distance, metric space}

\hypersetup{linkcolor=black}
\tableofcontents
\hypersetup{linkcolor=blue}

\section{Introduction}

\subsection{Introduction and related work}

Consider independent long-range bond percolation on $\Z^d$ where an edge $\{x,y\}$ is open with probability
\begin{equation*}
	p(\beta,\{x,y\}) \coloneqq 1-e^{-\beta J(x-y) } ,
\end{equation*}
where $J:\Z^d \to \left[0,\infty\right]$ is a kernel and $\beta \geq 0$ is a parameter. Here we define that $0\cdot\infty=\infty$ and $e^{-\infty}=0$. If there is an edge between $x$ and $y$, we write $x\sim y$. In this paper, we only consider the case where
\begin{equation*}
	J(x-y) = +\infty \text{ if } \|x-y\|=1,
\end{equation*}
with $\|\cdot\| = \|\cdot\|_2$ denoting the usual Euclidean norm on $\R^2$.
So in particular, the graph with vertex set $\Z^d$ equipped with the open edges is almost surely connected. We are interested in the size and structure of the ball $\mathcal{B}_k$ of radius $k$ around the origin, which is the set
\begin{equation*}
	\cB_k = \left\{x\in \Z^d : D(\mz,x)\leq k\right\},
\end{equation*}
where
\begin{equation*}
	D(\mz,x) = \inf\{n\in \N: \text{ There is an open path of length $n$ from $\mz$ to $x$}\}
\end{equation*}
is the graph distance (also called chemical distance or hop-count distance) between $\mz$ and $x$. As the nearest-neighbor edges are always connected, it directly follows that $D(x,y)\leq\|x-y\|_1$ for all $x,y\in \Z^d$. For a set $A\subset \Z^d$ and points $x,y\in A$ we define
\begin{equation*}
	D_A(x,y) \coloneqq \inf\{n\in \N: \text{ There is an open path of length $n$ from $x$ to $y$ inside $A$}\}
\end{equation*}
as the distance between $x$ and $y$ within the set $A$, and we define the diameter of a set $A\subset \Z^d$ by
\begin{equation*}
	\dia(A) = \sup_{x,y \in A} D_A(x,y)
\end{equation*}
as the maximum distance between points $x,y\in A$.
The set $\cB_k$ can also be an infinite set, which happens in the case where
\begin{equation}\label{eq:finite degree}
	\E \left[\deg(\mz)\right] = \sum_{x\in \Z^d \setminus \{\mz\}} \p \left(\mz \sim x\right) = \infty ,
\end{equation}
where we write $\deg(y)$ for the degree of a vertex, which is the number of open edges with $y$ as an endpoint. Indeed, if the expected degree of the origin is infinite, a standard Borel-Cantelli argument together with the independence of different edges implies that $|\cB_1|=\infty$ almost surely. On the other hand, if the expected degree of the origin is finite, a comparison with a Galton-Watson tree shows that for all $k\in \N$
\begin{equation*}
	\E \left[ |\cB_k| \right] \leq \E\left[\deg(\mz)\right]^k
\end{equation*}
and thus $\cB_k$ is almost surely a finite set for all $k\in \N$.

In this paper, we are mostly interested in the case where the kernel $J$ decays polynomially in the Euclidean distance, i.e. $J(x) = \Theta\left(\|x\|^{-s}\right)$ for some $s>0$. Depending on the value of $s$, the expected degree can be finite or infinite; summing over all possible $x\neq \mz$ in \eqref{eq:finite degree} shows that the expected degree is finite if and only if $s>d$. Regarding the chemical distances in these random graphs, there are five different regimes for the growth of the chemical distance, depending on the exponent $s$.
\begin{itemize}
	\item For $s<d$, Benjamini, Kesten, Peres, and Schramm \cite{benjamini2011geometry} proved that the graph distance between any two points $x,y\in \Z^d$ is at most $\lceil\frac{d}{d-s}\rceil$, and the graph distance between $x$ and $y$ converges to $\lceil\frac{d}{d-s}\rceil$ in probability as $\|x-y\| \to \infty$.
	
	\item For $s=d$, Coppersmith, Gamarnik, and Sviridenko \cite{coppersmith2002diameter} determined the asymptotic growth of the chemical distance to be of order $\frac{\log(\|x-y\|)}{\log \log(\|x-y\|)}$. More precise asymptotics of the diameter were recently determined by Wu \cite{wu2022sharp}.
	
	\item For $s\in (d,2d)$, Biskup et al. \cite{biskup2004scaling, biskup2011graph, biskup2019sharp} proved that the graph distance between $x$ and $y$ is of order
	\begin{equation*}
		\log(\|x-y\|)^\Delta \ \ \text{ with } \Delta^{-1} = \log_2 \left(\frac{2d}{s}\right).
	\end{equation*}
	Note that the graph containing the open edges is locally finite if and only if $s>d$, so for $s\in (d,2d)$ the long-range percolation graph is locally finite.
	
	\item  For $s=2d$, the author proved that the graph distance between $x$ and $y$ grows polynomially in the Euclidean distance \cite{baumler2022distances}. In dimension $d=1$, this was previously proven by Ding and Sly \cite{ding2013distances}.
	
	\item For $s>2d$, Berger proved that the graph distance between $x$ and $y$ grows linearly in the Euclidean distance \cite{berger2004lower}.
\end{itemize}

Some of the results stated above also hold in the case where $J$ does not take the value $+\infty$, i.e., when there are no edges that are almost surely open. In this case, one restricts to points $x$ and $y$ in the supercritical infinite open cluster. In particular, it follows from the results of \cite{baumler2023continuity} that in dimensions $d\geq 2$ the typical distance between two points $x$ and $y$ in a supercritical infinite long-range percolation cluster with polynomially decaying connections ($s>d$) can not grow faster than the Euclidean distance. For the case where $J$ does not take the value $+\infty$, long-range percolation also can have a percolation phase transition in one dimension, depending on the speed of the decay of $J$ \cite{aizenman1986discontinuity,newman1986one,schulman1983long}. Also the continuity of the phase transition is well-understood  for long-range percolation in one dimension \cite{hutchcroft2021power,hutchcroft2022sharp,berger2002transience,aizenman1986discontinuity}.\\

In this paper, we are interested in the asymptotic growth of the ball $\cB_k$ as $k\to \infty$. This problem was previously considered by Trapman \cite{trapman2010growth}, who determined that $s=d$ is the critical value for the growth of the ball when considered at an exponential scale. Assuming that $\E \left[\deg(\mz)\right]=\infty$, it directly follows that $|\cB_1|=\infty$ and thus $|\cB_k|^{1/k}$ diverges to $+\infty$ as $k\to \infty$. Contrary to that, if $s>d$, Trapman showed that $\cB_k$ grows subexponentially, i.e., $|\cB_k|^{1/k}$ converges to $1$ as $k\to \infty$. For the critical case $s=d$, he proved that there are kernels $J$ such that $J(x)=\|x\|^{-d+o(1)}$ for which there exist constants $1< a_1\leq a_2 < \infty$ such that
\begin{equation*}
	\lim_{k\to \infty} \p \left( a_1^k \leq |\cB_k| \leq a_2^k \right) = 1.
\end{equation*}
In this paper, we are interested not in the exponential growth of $\cB_k$, but in the polynomial growth. Here, we identify $s=2d$ to be the critical exponent for the long-range model. Before stating our results, we first review the setup that was considered in \cite{baumler2022behavior,baumler2022distances,ding2013distances} to study the chemical distances in the case $s=2d$.

\subsection{The self-similar case}\label{sec:self sim}

Assume that different edges are independent of each other and that an edge $\{u,v\}$ is open with probability
\begin{equation*}
	p(\beta,\{u,v\})  = 1-e^{-\beta J(u-v)},
\end{equation*}
where $\beta \geq 0$ is a parameter and
\begin{equation}\label{eq:selfsim kernel}
	J(u-v)=\int_{u+\left[0,1\right)^d} \int_{v+\left[0,1\right)^d} \frac{1}{\|x-y\|^{2d}} \md x \md y, 
\end{equation}
with $\|\cdot\|$ denoting the Euclidean norm. We call the corresponding probability measure $\p_\beta$ and denote its expectation by $\E_\beta$. As the integral $\int_{u+\left[0,1\right)^d} \int_{v+\left[0,1\right)^d} \frac{1}{\|x-y\|^{2d}} \md x \md y $ diverges for $\|u-v\|_\infty = 1$, this implies that $J(u-v)=+\infty$ when $\|u-v\|_\infty = 1$ and thus the nearest-neighbor edges (in the $\infty$-norm) are connected with probability $1$. In this setup, it was proven in \cite[Theorem 1.1]{baumler2022distances} that for all $\beta \geq 0$ and $d$ there exists an exponent $\theta = \theta(\beta,d)$ such that
\begin{align}\label{eq:theo:exponent1}
	\|u\|^\dxp \approx_P D\left(\mz,u\right)  \approx_P \E_\beta\left[D(\mz,u)\right]
\end{align}
and
\begin{align}\label{eq:theo:exponent2}
	n^\dxp \approx_P \dia\left( \left\{0,\ldots, n\right\}^d \right)  \approx_P \E_\beta\left[ \dia\left( \left\{0,\ldots, n\right\}^d \right) \right]\text,
\end{align}
where the notation $A(n)\approx_P B(n)$ means that for all $\eps >0 $ there exist $0<c<C<\infty$ such that $\p\left(cB(n) \leq A(n) \leq CB(n)\right)> 1-\eps$ for all $n \in \N$. 
Regarding the tail behavior of the diameter, it was proven \cite[Theorem 6.1]{baumler2022distances} that
\begin{align}\label{eq:stretched upper bound}
	\sup_{n \in \N } \E_\beta \left[ \exp \left( \left( \frac{\dia \left(\{0,\ldots,n\}^d\right)}{n^{\dxp(\beta,d)}}\right)^{\eta} \right) \right] < \infty  
\end{align}	
for all $\eta < \frac{1}{1-\theta(\beta,d)}$. Furthermore, it is known that the exponent $\theta(\beta,d)$ is continuous and strictly decreasing as a function in $\beta$ and there are some results on the limiting behavior as $\beta \to 0$, respectively $\beta \to \infty$, see \cite{baumler2022behavior} and \cite[Theorem 1.2]{baumler2022distances}. In this paper, we will use that for all dimensions $d\geq 1$
\begin{equation}\label{eq:theta knowledge}
	\theta(\beta,d) \text{ is continuous in $\beta$}, \ \theta(0,d)=1, \text{ and that } \lim_{\beta \to \infty} \theta(\beta,d) = 0 .
\end{equation}
Regarding the particular choice of the kernel $J$ in \eqref{eq:selfsim kernel}, there is an underlying continuous model and a coupling with a Poisson point process that gives rise to exactly this kernel. The precise details of this construction were given in \cite[Section 1.2]{baumler2022distances} and we do not repeat it here. The most important feature of this construction is an invariance under renormalizations in this model. For $u\in \Z^d$ and $n\in \N$, define
\begin{equation*}
	V_u^n \coloneqq n\cdot u + \{0,\ldots,n-1\}^d = n \cdot u + V_\mz^n,
\end{equation*}
which implies that $\left(V_u^n\right)_{u\in \Z^d}$ is a tessellation of $\Z^d$. With the connection probability defined through \eqref{eq:selfsim kernel} one has for all $u,v\in \Z^d$ with $u\neq v$ that
\begin{equation*}
	\p_\beta \left(V_u^n \sim V_v^n\right) = \p_\beta \left( u \sim v \right) .
\end{equation*}
So if one starts with long-range percolation graph with vertex set $\Z^d$ and then identifies boxes of the form $V_u^n$ to one point, call it $r(u)$, then the resulting random graph $G^\prime=(V^\prime,E^\prime)$ with vertex set $V^\prime=\left\{r(u) : u\in \Z^d\right\}$ and edge set $E^\prime =\{\{r(u),r(v)\}: V_u^n \sim V_v^n\}$ has the same distribution as the original long-range percolation graph. This is also why the kernel $J$ defined in \eqref{eq:selfsim kernel} is called the {\sl self-similar} kernel and the resulting model is called the {\sl self-similar} long-range percolation model.

\subsection{Main results}

After reviewing these results for the self-similar model, we can present our main results. Remember that we write $|\cB_k| = \left|\left\{x \in \Z^d : D(\mz,x) \leq k\right\}\right|$ for the size of the ball of radius $k$ around the origin. The first result concerns the probability that $\left|\cB_k\right| k^{-\frac{d}{\theta(\beta,d)}}$ is large. In Corollary \ref{coro1} below, we will see that $k^{\frac{d}{\theta(\beta,d)}}$ is the typical size of the ball $\cB_k$. 
\begin{theorem}\label{theo:main}
	For all $\beta > 0$ and $d\geq 1$, there exists a constant $C < \infty$ such that for all large enough $n,K \in \N$
	\begin{equation*}
		\p_\beta \left( |\cB_n| \geq C K n^{\frac{d}{\theta(\beta,d)}} \right) \leq 1.5^{-K}.
	\end{equation*}
\end{theorem}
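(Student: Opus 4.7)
The plan is to reduce the volume estimate to a count of mesoscopic boxes touched by $\cB_n$ in the self-similar tessellation of Section~\ref{sec:self sim}, and then to argue via a multi-scale analysis that $\cB_n$ touches only few boxes with overwhelming probability.

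\emph{Step 1 (Volume-to-boxes reduction).} Set $m := \lceil n^{1/\theta(\beta,d)} \rceil$ and tessellate $\Z^d$ into the boxes $V_u^m$, $u \in \Z^d$, from Section~\ref{sec:self sim}. Let $N := |\{u \in \Z^d : \cB_n \cap V_u^m \neq \emptyset\}|$. Since each box contains at most $m^d = O(n^{d/\theta(\beta,d)})$ vertices, one has $|\cB_n| \leq m^d N$, so $\{|\cB_n| \geq CKn^{d/\theta(\beta,d)}\} \subseteq \{N \geq cK\}$ for some $c = c(C) > 0$. It thus suffices to prove $\p_\beta(N \geq cK) \leq 1.5^{-K}$ for $C$ chosen large enough.

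\emph{Step 2 (Tail on $N$).} By the self-similarity from Section~\ref{sec:self sim}, the renormalized graph in which each $V_u^m$ is identified with a single vertex is distributed as the original long-range percolation graph, and the set $\{u : \cB_n \cap V_u^m \neq \emptyset\}$ is contained in the ball of radius $n$ around $V_\mz^m$ in the renormalized graph (a path of length $\leq n$ in the original graph projects to one of length $\leq n$ after renormalization). Since a typical $\cB_n$ has Euclidean extent $\asymp n^{1/\theta(\beta,d)} = m$, $N$ is typically $O(1)$; the event $\{N \geq cK\}$ therefore forces $\cB_n$ to reach renormalized $\ell_\infty$-distance $\gtrsim K^{1/d}$ from $V_\mz^m$. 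For a box $V_u^m$ at renormalized $\ell_\infty$-distance $L$, the renormalized graph distance from $V_\mz^m$ to $V_u^m$ is of typical size $\asymp L^{\theta(\beta,d)}$ by~\eqref{eq:theo:exponent1}, so the event $\cB_n \cap V_u^m \neq \emptyset$ is a lower-tail event on this renormalized distance, which one should control quantitatively using \eqref{eq:stretched upper bound} applied at several scales.

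\emph{Step 3 (Main obstacle: multi-scale iteration).} The crux is obtaining geometric decay in $K$ rather than merely polynomial decay. My plan is a multi-scale argument in the spirit of the proof of \eqref{eq:stretched upper bound} in \cite{baumler2022distances}: at each dyadic scale $2^j m$, the upper-tail bound on the diameter (and hence on fast exits from the corresponding boxes) is combined with the independence of edges between disjoint pairs of boxes to produce an independent factor $\leq c_0 < 1$ for each extra hit box beyond the trivial renormalized-adjacent ones of $V_\mz^m$. The technical challenge is to handle correctly the dependencies introduced by paths that share long edges across different scales, and to balance the scale decomposition (together with the loss coming from union-bounding over possible families of $cK$ far boxes) so that the final bound reaches $1.5^{-K}$ uniformly in large $n$.
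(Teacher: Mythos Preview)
Your Step~1 (the volume-to-boxes reduction at scale $m\asymp n^{1/\theta}$) is correct and is exactly what the paper does. The trouble is in Steps~2--3.

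The reduction in Step~2 to ``$\cB_n$ must reach renormalized $\ell_\infty$-distance $\gtrsim K^{1/d}$'' is valid, but it does not lead to exponential decay. Even if you had the sharp lower-tail bound $\p_\beta(D(\mz,x)\leq \eps\|x\|^\theta)\lesssim \eps^{2d/\theta}$ (this is Theorem~\ref{theo:lower tail}, which the paper proves \emph{after} Theorem~\ref{theo:main} using the same machinery), a union bound over boxes $V_u^m$ with $\|u\|_\infty\geq c'K^{1/d}$ gives at best
\[
\sum_{L\geq c'K^{1/d}} L^{d-1}\cdot L^{-2d}\ \asymp\ K^{-1},
\]
i.e.\ polynomial, not geometric, decay in $K$. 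Also, your citation of \eqref{eq:stretched upper bound} is the wrong direction: that is an \emph{upper}-tail bound on the diameter, while here you need a \emph{lower}-tail bound on point-to-box distances. Your Step~3 multi-scale plan is too vague to close this gap; in particular it is unclear how ``an independent factor $\leq c_0<1$ per extra hit box'' could be extracted, since the touched boxes form one connected set in a long-range graph and there is no obvious product structure.

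The paper takes a completely different, single-scale route. It introduces a notion of a \emph{good} block $V_w^m$: one that any crossing path must spend at least $\delta m^\theta$ steps inside (Lemmas~\ref{lem:goodeventB}--\ref{lem:goodeventD}). The set of renormalized boxes touched by $\cB_{\delta m^\theta}$ is then contained in the ``bad cluster'' of $r(\mz)$ in $G'$ together with its edge-boundary and $\ell_\infty$-neighbours (Proposition~\ref{propo:main}). The exponential tail in $K$ comes not from any distance estimate but from a first-moment count of connected sets in $G'$ (Lemma~\ref{lem:connnectedsetsinLRP}): the expected number of size-$K$ connected sets through $r(\mz)$ is at most $(4\mu_\beta)^K$, and each such set contains a good vertex except with probability exponentially small in $K$ (Lemma~\ref{lem:good sets}), which, combined with an isoperimetric bound (Lemma~\ref{lem:isoper}), yields $\p_\beta(N>3^d100\mu_\beta K)\leq 4\cdot 2^{-K}$. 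This ``good-block barrier plus connected-set counting'' mechanism is the missing idea in your proposal.
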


With this theorem, we can already determine the asymptotic growth of the ball $\cB_k$ under the measure $\p_\beta$.

\begin{corollary}\label{coro1}
	For all $\beta > 0$ and $d\geq 1$ one has
	\begin{equation*}
		|\cB_k| \approx_P k^{\frac{d}{\dxp(\beta,d)}}  \ \ \text{ and } \ \ \E_\beta\left[|\cB_k| \right] = \Theta \left(  k^{\frac{d}{\dxp(\beta,d)}} \right).
	\end{equation*}
	Furthermore,
	\begin{equation*}
		\lim_{k\to \infty} \frac{\log(|\cB_k|)}{\log(k)} = \frac{d}{\theta(\beta,d)} 
	\end{equation*}
	almost surely under the measure $\p_\beta$.
\end{corollary}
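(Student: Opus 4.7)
The plan is to establish Corollary \ref{coro1} by combining Theorem \ref{theo:main} (for the upper bounds on $|\cB_k|$) with the diameter estimates \eqref{eq:theo:exponent2} and \eqref{eq:stretched upper bound} (for the lower bounds). The in-probability upper bound is immediate from Theorem \ref{theo:main}: given $\eps>0$, choose $K$ with $1.5^{-K}<\eps$. The expectation upper bound then follows by integrating the tail: writing $\E_\beta[|\cB_n|]=\int_0^\infty \p_\beta(|\cB_n|\geq t)\md t$ and substituting $t=CK n^{d/\theta(\beta,d)}$ produces a geometrically convergent sum of order $n^{d/\theta(\beta,d)}$.

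The lower bound rests on a simple geometric observation: if $\dia(\{0,\dots,n\}^d)\leq k$, then every vertex of the box has graph distance at most $k$ from $\mz$, so $|\cB_k|\geq (n+1)^d$. Setting $n=\lfloor c k^{1/\theta(\beta,d)}\rfloor$ with $c$ small and applying \eqref{eq:theo:exponent2} gives $\p_\beta(\dia(\{0,\dots,n\}^d)\leq k)\geq 1-\eps$, yielding the matching in-probability lower bound; the expectation lower bound is then immediate from $\E_\beta[|\cB_k|]\geq (n+1)^d\,\p_\beta(\dia(\{0,\dots,n\}^d)\leq k)$.

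The almost-sure convergence is obtained by upgrading each bound via Borel-Cantelli. For the upper bound, I apply Theorem \ref{theo:main} with $K=\lceil(\log n)^2\rceil$; the resulting probabilities $1.5^{-(\log n)^2}$ are summable, so almost surely $|\cB_n|\leq C n^{d/\theta(\beta,d)}(\log n)^2$ for all large $n$, forcing $\limsup \log|\cB_n|/\log n\leq d/\theta(\beta,d)$. For the lower bound, fix $\eps>0$, set $n_k=\lfloor c k^{(1-\eps)/\theta(\beta,d)}\rfloor$, and pick any $\eta<1/(1-\theta(\beta,d))$. Then \eqref{eq:stretched upper bound} combined with Markov's inequality gives
\begin{equation*}
\p_\beta\!\left(\dia(\{0,\dots,n_k\}^d) > k\right)\leq C\exp\!\left(-(k/n_k^{\theta(\beta,d)})^\eta\right)\leq C\exp\!\left(-c^{-\theta(\beta,d)\eta}\,k^{\eps\eta}\right),
\end{equation*}
which is summable in $k$. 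Hence almost surely eventually $|\cB_k|\geq n_k^d$, producing $\liminf \log|\cB_k|/\log k\geq d(1-\eps)/\theta(\beta,d)$; intersecting the full-measure events over a sequence $\eps_m\to 0$ removes the $\eps$ and gives the claimed almost-sure limit.

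The only delicate step is the almost-sure lower bound: the stretched-exponential tail \eqref{eq:stretched upper bound} alone is constant in $k$ when $n^\theta\asymp k$, so the polynomial slack $k^\eps$ in the choice of $n_k$ is essential for summability. Everything else reduces to direct quantitative consequences of Theorem \ref{theo:main} and \eqref{eq:theo:exponent2}.
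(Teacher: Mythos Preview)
Your proof is correct and follows essentially the same route as the paper: the upper bounds come from Theorem~\ref{theo:main} (tail integration for the expectation, Borel--Cantelli for the almost-sure statement), and the lower bounds come from the containment $\{\dia(\{0,\dots,n\}^d)\le k\}\subset\{|\cB_k|\ge (n+1)^d\}$ combined with the diameter estimates \eqref{eq:theo:exponent2} and \eqref{eq:stretched upper bound}. The only cosmetic difference is that the paper phrases the almost-sure upper bound via a polynomial slack $K\sim k^{\delta}$ rather than your $K=\lceil(\log n)^2\rceil$, and derives the in-probability lower bound directly from \eqref{eq:stretched upper bound} rather than from \eqref{eq:theo:exponent2}; both variants are equivalent for the conclusions claimed.
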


Using a comparison argument between different percolation measures, Corollary \ref{coro1} already implies the following result.

\begin{corollary}\label{coro2}
	Let $\p$ be a measure of independent bond percolation on $\Z^d$ such that $\p \left( x\sim y \right) = 1$ if $\|x-y\|=1$. Define $\beta^{\star}$ and $\beta_{\star}$ by
	\begin{align*}
		\beta^{\star} & \coloneqq \limsup_{\|x-y\| \to \infty} \|x-y\|^{2d} \p(x\sim y) \ \text{ and } \
		\beta_{\star}  \coloneqq \liminf_{\|x-y\| \to \infty} \|x-y\|^{2d} \p(x\sim y).
	\end{align*}
	If $0<\beta_\star \leq \beta^\star < \infty$, then
	\begin{align}
		\label{eq:coro 2 1}& \liminf_{k\to \infty} \frac{\log(|\cB_k|)}{\log(k)} \geq \frac{d}{\theta(\beta_{\star},d)} \ \ \p\text{-almost surely, and } \\
		\label{eq:coro 2 2}& \limsup_{k\to \infty} \frac{\log(|\cB_k|)}{\log(k)} \leq \frac{d}{\theta(\beta^{\star},d)} \ \ \p\text{-almost surely.}
	\end{align}
	In particular, if $\beta_{\star}=\beta^{\star}=\beta \in (0,\infty)$, then 
	\begin{equation}\label{eq:coro 2 3}
		\lim_{k\to \infty} \frac{\log(|\cB_k|)}{\log(k)} = \frac{d}{\theta(\beta,d)} \ \ \p\text{-almost surely}.
	\end{equation}
	If $\p(x\sim y)= o(\|x-y\|^{-2d})$, i.e, if $\beta^{\star}=0$, then 
	\begin{equation}\label{eq:coro 2 4}
		\lim_{k\to \infty} \frac{\log(|\cB_k|)}{\log(k)} = d \ \ \p\text{-almost surely},
	\end{equation}
	and if $\p(x\sim y)= \omega(\|x-y\|^{-2d})$, i.e, if $\beta_{\star}=+\infty$, then 
	\begin{equation}\label{eq:coro 2 5}
		\lim_{k\to \infty} \frac{\log(|\cB_k|)}{\log(k)} = +\infty \ \ \p\text{-almost surely}.
	\end{equation}
\end{corollary}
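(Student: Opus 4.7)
I will prove \eqref{eq:coro 2 1}--\eqref{eq:coro 2 5} by two-sided stochastic comparisons with the self-similar measures $\p_\beta$ of Section \ref{sec:self sim}, combined with Corollary \ref{coro1} and the continuity of $\theta(\cdot,d)$ from \eqref{eq:theta knowledge}. The key analytic input is that $J(u-v) = \|u-v\|^{-2d}(1+o(1))$ as $\|u-v\|\to\infty$, which follows from \eqref{eq:selfsim kernel} because the integrand is essentially constant on two unit cubes that are far apart; consequently $\p_{\hat\beta}(u\sim v) = \hat\beta\|u-v\|^{-2d}(1+o(1))$ as well.

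\textbf{Upper bound.} For \eqref{eq:coro 2 2}, fix $\delta>0$, set $\hat\beta = \beta^\star + \delta$, and pick $R$ so large that $\p(x\sim y)\leq \p_{\hat\beta}(x\sim y)$ whenever $\|x-y\|_\infty > R$; this is possible by the definition of $\beta^\star$ and the asymptotic above. Define modified percolation measures $\tilde\p$ and $\tilde\p_{\hat\beta}$ in which every edge of $\ell^\infty$-length at most $R$ is open deterministically, and the remaining edges are independent with their original probabilities. A monotone coupling through a single family of uniform variables yields $|\cB_k|_\p \leq |\cB_k|_{\tilde\p} \leq |\cB_k|_{\tilde\p_{\hat\beta}}$. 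Each auxiliary deterministic edge of $\tilde\p_{\hat\beta}$ has $\ell^\infty$-length $\leq R$ and can be replaced by at most $R$ steps along $\ell^\infty$-nearest-neighbor edges, which are almost surely open in $\p_{\hat\beta}$; hence $|\cB_k|_{\tilde\p_{\hat\beta}} \leq |\cB_{Rk}|_{\p_{\hat\beta}}$. Applying Corollary \ref{coro1} to $\p_{\hat\beta}$ yields $\limsup_k \log|\cB_k|_\p/\log k \leq d/\theta(\hat\beta,d)$ almost surely; letting $\delta\downarrow 0$ along a countable sequence and invoking continuity of $\theta$ gives \eqref{eq:coro 2 2}.

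\textbf{Lower bound.} For \eqref{eq:coro 2 1} I first assume $\beta_\star > 0$. Fix $\delta>0$ with $\check\beta = \beta_\star - \delta > 0$, and pick $R$ so that $\p(x\sim y)\geq \p_{\check\beta}(x\sim y)$ whenever $\|x-y\|_\infty > R$. Let $\p_{\check\beta}^{\mathrm{thin}}$ denote the sub-configuration of $\p_{\check\beta}$ that retains only its (always open) Euclidean nearest-neighbor edges and the long-range edges of $\ell^\infty$-length $>R$; an edgewise monotone coupling then gives $|\cB_k|_{\p_{\check\beta}^{\mathrm{thin}}} \leq |\cB_k|_\p$. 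The discarded edges of $\p_{\check\beta}$ all have $\ell^1$-length $\leq dR$, so each can be replaced by a Euclidean nearest-neighbor path of length $\leq dR$ in $\p_{\check\beta}^{\mathrm{thin}}$, yielding $|\cB_k|_{\p_{\check\beta}} \leq |\cB_{dRk}|_{\p_{\check\beta}^{\mathrm{thin}}}$. Combining the inequalities and reindexing gives $|\cB_{\lfloor k/(dR)\rfloor}|_{\p_{\check\beta}} \leq |\cB_k|_\p$, after which Corollary \ref{coro1} and continuity of $\theta$ (with $\delta\downarrow 0$) produce \eqref{eq:coro 2 1}.

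\textbf{Edge cases and main obstacle.} The identity \eqref{eq:coro 2 3} follows from combining the two one-sided bounds. When $\beta^\star = 0$, the upper-bound argument applies for every $\hat\beta>0$, and $\theta(\hat\beta,d)\to 1$ as $\hat\beta\downarrow 0$ gives $\limsup_k \log|\cB_k|/\log k \leq d$ almost surely; the deterministic bound $|\cB_k| \geq |\{x\in\Z^d : \|x\|_1 \leq k\}| = \Theta(k^d)$, coming from the always-open Euclidean nearest-neighbor edges in $\p$, yields $\liminf \geq d$ and hence \eqref{eq:coro 2 4}. When $\beta_\star = +\infty$, the lower-bound argument applies for every $\check\beta>0$, and $\theta(\check\beta,d)\to 0$ as $\check\beta\to\infty$ produces \eqref{eq:coro 2 5}. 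The main technical subtlety throughout is the mismatch between the deterministic edge sets of $\p$ (Euclidean nearest neighbors) and of $\p_\beta$ ($\ell^\infty$ nearest neighbors), which prevents a direct pointwise comparison of edge probabilities; the path-replacement arguments above absorb this mismatch at the cost of a multiplicative factor $R$ or $dR$ in the radius of the ball, and that factor is invisible in the ratio $\log|\cB_k|/\log k$.
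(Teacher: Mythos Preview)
Your proposal is correct and follows essentially the same strategy as the paper: a Harris-type monotone coupling with the self-similar measures $\p_{\beta_\star-\eps}$ and $\p_{\beta^\star+\eps}$, a path-replacement argument that absorbs the short-range discrepancy into a multiplicative constant on the ball radius, an appeal to Corollary~\ref{coro1}, and finally continuity of $\theta(\cdot,d)$ from \eqref{eq:theta knowledge}. The paper argues the path replacement slightly more directly (showing $D(x,y;\omega)\le M\,D(x,y;\omega_{\beta_\star-\eps})$ without introducing the auxiliary thinned/enriched measures), and it leaves the $\ell^2$-vs.-$\ell^\infty$ nearest-neighbor mismatch implicit, whereas you flag it explicitly; but these are presentational rather than substantive differences.
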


\noindent
One other problem we consider is the deviation from the median of the graph distance $D(\mz,x)$ under the measure $\p_\beta$. Using the stretched-exponential tightness \eqref{eq:stretched upper bound} and Markov's inequality one gets an upper bound on the probability of events of the form $\left\{D(\mz,x)\geq K \|x\|^\theta\right\}$ for $K$ large. In the next theorem, we are instead interested in the lower tail of the chemical distance, i.e., in the probability of events of the form $\left\{D(\mz,x) \leq \eps \|x\|^\theta\right\}$. Here we prove up-to-constants upper and lower bounds.

\begin{theorem}\label{theo:lower tail}
	For all $d$ and $\beta> 0$, there exist constants $0<c<C<\infty$ such that for all $x\in \Z^d\setminus\{\mz\}$ and all $\eps \in (0,1)$ with $\eps \|x\|^{\theta(\beta,d)} \geq 1$ one has
	\begin{equation}\label{eq:lower tail}
		c \eps^{\frac{2d}{\theta}} \leq
		\p_\beta \left( \frac{D(\mz,x)}{\|x\|^{\theta}} \leq \eps  \right) 
		\leq C \eps^{\frac{2d}{\theta}}.
	\end{equation}
\end{theorem}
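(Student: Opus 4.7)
My plan is to prove the two inequalities separately: the lower bound via a ``planted long edge'' construction together with FKG, and the upper bound via a BK decomposition at a long edge of the geodesic combined with a multi-scale induction.

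For the lower bound, set $L = \eps \|x\|^\theta$ and $r = c \eps^{1/\theta} \|x\|$ for a small constant $c>0$ to be chosen (so $r^\theta = c^\theta L$), and let $B = \mz + \{-r,\ldots,r\}^d$, $B' = x + \{-r,\ldots,r\}^d$ be cubes of Euclidean radius $r$ around $\mz$ and $x$. Define the increasing events
\begin{equation*}
A := \{\exists\, u \in B,\ v \in B':\ \{u,v\}\text{ open}\}, \qquad E := \{\dia(B) \leq L/3\} \cap \{\dia(B') \leq L/3\};
\end{equation*}
the event $E$ is increasing because opening more edges can only decrease within-box distances. On $A \cap E$, concatenating a $B$-internal geodesic $\mz \to u$, the edge $\{u,v\}$, and a $B'$-internal geodesic $v \to x$ gives $D(\mz,x) \leq 2L/3 + 1 \leq L$, provided $L$ is not too small; the borderline regime $L \asymp 1$ is handled directly by $\p_\beta(\mz \sim x) \asymp \|x\|^{-2d}$. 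By FKG, $\p_\beta(A \cap E) \geq \p_\beta(A)\p_\beta(E)$. Since each $(u,v) \in B \times B'$ has $\|u-v\| \asymp \|x\|$, one has $\p_\beta(\{u,v\}\text{ open}) \asymp \beta \|x\|^{-2d}$, and by independence of edges $\p_\beta(A) \geq 1 - \exp(-c_1 r^{2d}\|x\|^{-2d}) \geq c_2 \eps^{2d/\theta}$. For $E$, note that $L/3 = (3c^\theta)^{-1} r^\theta$; the stretched-exponential bound \eqref{eq:stretched upper bound} together with translation invariance then gives $\p_\beta(E) \geq 1/2$ once $c$ is chosen small enough to make the cutoff $(3c^\theta)^{-1}$ large.

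For the upper bound, the key deterministic observation is that any path of length at most $L$ from $\mz$ to $x$ has total Euclidean displacement $\geq \|x\|$ and hence contains at least one edge $\{u,v\}$ with $\|u-v\| \geq \|x\|/L$. Decomposing the geodesic at this edge and applying the BK inequality (the two subpaths and the edge $\{u,v\}$ are pairwise edge-disjoint, all three increasing),
\begin{equation*}
\p_\beta(D(\mz,x) \leq L) \leq \sum_{u,v:\ \|u-v\| \geq \|x\|/L} \p_\beta(\{u,v\}\text{ open}) \cdot \p_\beta(D(\mz,u) \leq L) \cdot \p_\beta(D(v,x) \leq L).
\end{equation*}
Combining $\p_\beta(\{u,v\}\text{ open}) \leq C\|u-v\|^{-2d}$ with an inductive bound $\p_\beta(D(\mz,y) \leq L') \leq \min(1,\, C(L')^{2d/\theta}\|y\|^{-2d})$ and the convolution estimate $\sum_{y \neq \mz,x}\|y\|^{-2d}\|y-x\|^{-2d} \asymp \|x\|^{-2d}$ produces the correct decay $\|x\|^{-2d}$ in $\|x\|$, while the factor $L^{2d/\theta}$ arises from matching the long-edge scale $\|x\|/L$ against the kernel decay $\|u-v\|^{-2d}$ after organizing the $(u,v)$-sum by scales.

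The main obstacle is closing the induction with a constant $C$ independent of $L$: naively iterating the BK step multiplies the constant at every level and yields an exponential dependence on $L$. I would resolve this by working dyadically, grouping the edges by scale $2^k \in [\|x\|/L,\|x\|]$, estimating the contribution of each scale using the self-similar scaling $r \leftrightarrow r^\theta$ of the model (Section~\ref{sec:self sim}), and summing the resulting geometric series. The dominant contribution comes from the scale $2^k \asymp \|x\|/L$, where a single edge already realizes the required displacement; the two remaining subpaths at smaller scales then contribute only bounded factors via the convolution estimate, and the sharp exponent $2d/\theta$ emerges from balancing the single-edge kernel weight $\|u-v\|^{-2d}$ against the self-similar relation between path length and Euclidean length.
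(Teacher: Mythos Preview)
Your lower bound is correct and is exactly the paper's argument: two boxes of Euclidean radius $\asymp \eps^{1/\theta}\|x\|$ around $\mz$ and $x$, FKG on the three increasing events, and the single-edge probability $\asymp \eps^{2d/\theta}$ between the boxes.

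Your upper bound, however, has a genuine gap that your dyadic sketch does not close. Write $\phi(y)=\min\bigl(1,\,C L^{2d/\theta}\|y\|^{-2d}\bigr)$ for the inductive hypothesis. Your BK inequality gives
\[
\p_\beta\bigl(D(\mz,x)\le L\bigr)\;\le\;\sum_{u,v}\;C_0\|u-v\|^{-2d}\,\phi(u)\,\phi(v-x),
\]
and the dominant contribution already comes from the ``easy'' region $\|u\|\le L^{1/\theta}$, $\|v-x\|\le L^{1/\theta}$, where both $\phi$-factors are $1$ and $\|u-v\|\asymp\|x\|$. There are $\asymp L^{2d/\theta}$ such pairs, each weighted $\asymp\|x\|^{-2d}$, so this region alone reproduces $C_0 L^{2d/\theta}\|x\|^{-2d}$ with a \emph{fresh} constant; the remaining regions add further comparable terms (for instance the convolution $\sum_{v}\|u-v\|^{-2d}\phi(v-x)$ evaluated at $u$ near $\mz$ is $\asymp L^{2d/\theta}\|x\|^{-2d}$, not $\asymp \|x\|^{-2d}$, because of the contribution of $v$ near $u$). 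The upshot is that one BK step turns the constant $C$ into something of order $C^2$ (plus additive pieces), and grouping the long edge by dyadic scale does not help: every scale $2^k$ with $\|x\|/L\le 2^k\le \|x\|$ produces a comparable term, so the sum over scales is a further loss, not a geometric gain. This is precisely the ``exponential dependence on $L$'' you flag, and the paragraph after it does not actually remove it.

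The paper avoids this circularity by a single renormalization rather than an induction. One sets $M\asymp \eps^{1/\theta}\|x\|$ and contracts boxes $V_u^M$; then $\{D(\mz,x)\le \eps\|x\|^\theta\}$ forces a self-avoiding path $r(\mz)=r(v_0)\sim\cdots\sim r(v_K)=r(\overline{x})$ in $G'$ with \emph{all interior blocks bad} (otherwise a good block forces the original path to have length $\ge \delta M^\theta\ge \eps\|x\|^\theta$). Conditioning on the path being open, Lemma~\ref{lem:good paths} shows this bad-interior event has probability $\le e^{-3\mu_\beta K}$, and Lemma~\ref{lem:poly bound} gives $\sum_{\overline{v}\in S_{\mz,\overline{x}}^K}\p(\overline v\text{ open})\le C\mu_\beta^{3K}\|\overline{x}\|^{-2d}$; multiplying and summing over $K$ yields $O(\|\overline{x}\|^{-2d})=O(\eps^{2d/\theta})$. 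The essential new input beyond what you use is Lemma~\ref{lem:good paths} (badness along a \emph{conditioned} path is exponentially unlikely), which replaces the lossy inductive self-reference.
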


Following this theorem, the behavior of the lower tail is better understood than that of the upper tail. Indeed, for the upper tail, it is known that $\frac{D(\mz,x)}{\|x\|^\theta}$ has uniform stretched exponential moments for $\eta < \frac{1}{1-\theta(\beta,d)}$, i.e., that
\begin{align*}
	\sup_{x \in \Z^d \setminus \{\mz\} } \E_\beta \left[ \exp \left( \left( \frac{D(\mz,x)}{\|x\|^{\dxp(\beta,d)}}\right)^{\eta} \right) \right] < \infty  
\end{align*}
for $\eta < \frac{1}{1-\theta(\beta,d)}$ \cite[Theorem 6.1]{baumler2022distances}, and that the corresponding result is not true for $\eta > \frac{d}{1-\theta(\beta,d)}$, cf. \cite[Lemma 6.2]{baumler2022distances}. These bounds are only matching in dimension $d=1$, and for general dimension $d\geq 2$ it is not known so far what happens for $\eta \in \left(\frac{1}{1-\theta(\beta,d)},\frac{d}{1-\theta(\beta,d)}\right)$. For the lower tail, Theorem \ref{theo:lower tail} provides matching upper and lower bounds for all dimensions.

\subsection{Hausdorff dimension of the limiting metric space}

The volume growth plays an important role in the limiting metric space of the self-similar long-range percolation model. The limiting metric of self-similar percolation was recently proven to exist by Ding, Fan, and Huang \cite{ding2023uniqueness}. The idea is that one can define a limiting metric $\hat{D}:\R^d \times \R^d \to \left[0,\infty\right)$ that is a limit of the discrete long-range percolation metrics. For this scaling limit they proved, among many other things, the following result.

\begin{theorem}[Theorem 1.1 in \cite{ding2023uniqueness}]\label{theo:ding}
	Let $D$ be the chemical distance on the discrete $\beta$-long-range percolation model. Let $\widehat{a}_n$ be the median of $D(\mathbf{0}, n \mathbf{1})$ (here $\mathbf{1}=(1,1, \ldots, 1) \in \mathbb{R}^d$) and $\widehat{D}_n=\widehat{a}_n^{-1} D(\lfloor n \cdot\rfloor,\lfloor n \cdot\rfloor)$. Then there exists a unique random metric $\hat{D}$ on $\mathbb{R}^d$ such that $\widehat{D}_n$ converges to $\hat{D}$ in law with respect to the topology of local uniform convergence on $\mathbb{R}^{d} \times \R^d$.
\end{theorem}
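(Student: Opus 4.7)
The plan is to establish convergence via the standard two-step route: first prove tightness of the laws of $\widehat{D}_n$ in the space of continuous pseudometrics on $\mathbb{R}^d$ under local uniform convergence, and then show that every subsequential limit has the same law. The self-similarity of the kernel \eqref{eq:selfsim kernel} will drive both steps, and the quantitative estimates \eqref{eq:theo:exponent1}--\eqref{eq:stretched upper bound} together with Theorem \ref{theo:lower tail} will furnish the required moment and concentration control.

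For tightness, the first task is to show that for each fixed compact set $K \subset \mathbb{R}^d \times \mathbb{R}^d$ the family $\{\widehat{D}_n|_K\}_n$ is tight in $C(K)$. Upper-equicontinuity follows from \eqref{eq:stretched upper bound}: rescaling, the diameter of the box $\lfloor n\cdot\rfloor$ restricted to a mesoscopic cube of side $\delta n$ is controlled in $\widehat{a}_n$-units by $\delta^\theta$ times a random variable with stretched-exponential tails, yielding a Kolmogorov-Chentsov-type modulus of continuity with exponent $\theta/2$ and a uniform bound on oscillations. Lower-semi-tightness and strict positivity of the limit on $\{x\neq y\}$ follow from Theorem \ref{theo:lower tail}: the probability that $D(\mz,x)/\|x\|^\theta$ is smaller than $\varepsilon$ is at most $C\varepsilon^{2d/\theta}$, so $\widehat{D}_n(\mathbf{x},\mathbf{y})$ does not collapse to zero after rescaling. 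Combining both gives tightness in the local uniform topology and guarantees the existence of subsequential limits, each of which is a.s.\ a continuous metric on $\mathbb{R}^d$.

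For uniqueness, I would use the renormalization invariance recalled in Section \ref{sec:self sim}. If $n=m\ell$, the coupling that identifies each box $V_u^\ell$ to a single vertex produces a copy of the same long-range percolation model on $\Z^d$ with rescaled side $m$; in metric terms this yields an approximate relation
\begin{equation*}
\widehat{a}_n^{-1} D(\lfloor n\mathbf{x}\rfloor, \lfloor n \mathbf{y}\rfloor) \;\approx\; \frac{\widehat{a}_m}{\widehat{a}_n}\,\widehat{D}_m^\star(\mathbf{x},\mathbf{y}),
\end{equation*}
where $\widehat{D}_m^\star$ is an independent copy perturbed by the internal distances inside the $\ell$-boxes. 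The internal-box error can be controlled by \eqref{eq:stretched upper bound} and by Theorem \ref{theo:main}: a geodesic cannot use too many boxes because by Corollary \ref{coro1} the ball of radius $k$ meets only $O(k^{d/\theta})$ boxes. Passing to the subsequential limit, any limit law $\hat{D}$ must satisfy an exact self-similar distributional identity that, combined with the Benjamini-Schramm-type stationarity of the model under lattice translations and the fact that $\widehat{a}_{n\ell}/\widehat{a}_n \to \ell^\theta$ by \eqref{eq:theo:exponent2}, characterizes $\hat{D}$ uniquely. The standard way to conclude is a bootstrap: if $\hat{D}^{(1)}$ and $\hat{D}^{(2)}$ are two subsequential limits, one couples them using the same underlying Poisson process of edges, writes the ratio $\hat{D}^{(1)}/\hat{D}^{(2)}$, and uses the self-similar identity at scale $\ell$ to show that the essential supremum of this ratio on compact sets is invariant under $\ell \mapsto \ell^k$; combined with a concentration estimate produced by averaging over many independent $\ell$-boxes (which also relies on the polynomial volume growth from Corollary \ref{coro1}), this forces the ratio to equal $1$.

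The main obstacle is the uniqueness step. Tightness is essentially moment calculus once the bounds in \eqref{eq:stretched upper bound} and Theorem \ref{theo:lower tail} are in hand; but turning the approximate self-similarity into exact uniqueness requires a delicate error analysis of geodesics near box boundaries, because a geodesic in the discrete model may enter and exit a cell $V_u^\ell$ several times, and the induced internal detour distances are not a priori independent of the coarse-grained metric. Handling this correlation — most plausibly via a multi-scale coupling argument in which one discretizes at a hierarchy of scales $\ell, \ell^2, \ell^3, \ldots$ and controls the accumulated error using the stretched-exponential tails uniformly in the scale — is the real work; the rest of the proof is essentially a standard tightness/consistency argument.
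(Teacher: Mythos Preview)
This theorem is not proved in the paper at all: it is quoted verbatim as Theorem~1.1 of Ding, Fan, and Huang~\cite{ding2023uniqueness} and used as a black box input to Theorem~\ref{theo:hausdorff dimension}. There is therefore no ``paper's own proof'' to compare your proposal against; the task of establishing existence and uniqueness of the limiting metric $\hat D$ is entirely delegated to~\cite{ding2023uniqueness}.

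As for the proposal itself, the broad two-step outline (tightness from moment bounds, uniqueness from self-similarity) is the natural strategy and is indeed the shape of the argument in~\cite{ding2023uniqueness}, but what you have written is a sketch of a strategy rather than a proof, and you correctly identify the hard part yourself: the uniqueness step. The ``bootstrap on the ratio $\hat D^{(1)}/\hat D^{(2)}$'' you describe is far from routine --- this is the genuinely difficult contribution of~\cite{ding2023uniqueness}, which runs to many pages and requires careful bi-Lipschitz comparison arguments, not just ``averaging over independent boxes''. Also note that several of the inputs you invoke, in particular Theorem~\ref{theo:lower tail} and Corollary~\ref{coro1}, are results of \emph{this} paper; while they do not logically depend on Theorem~\ref{theo:ding}, citing them as ingredients in a proof of Theorem~\ref{theo:ding} reverses the actual order of the literature, since~\cite{ding2023uniqueness} establishes the scaling limit without access to the sharp lower-tail exponent $2d/\theta$.
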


Here, for a vector $x\in \R^d$, we define $\lfloor x \rfloor$ as the component-wise floor. One natural question about this metric space is, whether its Hausdorff dimension can be explicitly computed, depending on $\beta$ or $\theta(\beta,d)$. For a metric space $(Y,\tilde{D})$ and a subset $X\subset Y$, the Hausdorff dimension of the metric space $(X,\tilde{D})$ is defined as follows. For $\Delta \geq 0$, define the $\Delta$-Hausdorff content of $(X,\tilde{D})$ as
\begin{align*}
	& C_\Delta \left( X ,\tilde{D} \right) = \inf \left\{\sum_{i=1}^{\infty} r_i^\Delta \Big| \text{ There is a cover of $X$ by $\tilde{D}$-balls of radii $\left(r_i\right)_{i\geq 1}$}  \right\}.
\end{align*}
The Hausdorff dimension of $(X,\tilde{D})$ is now defined by
\begin{equation*}
	\dim_{\cH}^{\tilde{D}}(X) = \inf\left\{ \Delta \geq 0 : C_\Delta\left(X, \tilde{D}\right)=0\right\}.
\end{equation*} 
For $\beta\geq 0$, we also write $\dim_{\cH}^\beta(X)$ for the Hausdorff dimension of a set $X\subseteq \R^d$ equipped with the limiting long-range percolation metric $\hat{D}$ constructed through Theorem \ref{theo:ding}. For the special case $\beta=0$, the long-range percolation metric $\hat{D}$ just equals the metric induced by the $\infty$-norm on $\R^d$. So in particular, for any set $X\subset \R^d$, the Hausdorff dimension $\dim_{\cH}^0(X)$ just equals the usual Hausdorff dimension of $X$ as a subset of $\R^d$.

It was shown in \cite[Theorem 1.16]{ding2023uniqueness} that for any deterministic subset $X\subset \R^d$ one has
\begin{equation}\label{eq:ding hausdorff}
	\dim_{\cH}^\beta (X) \leq \frac{\dim_{\cH}^0 (X)}{\theta(\beta,d)}
\end{equation}
almost surely, and it was conjectured that this inequality is actually an equality, cf. \cite[Remark 1.17]{ding2023uniqueness}. In this paper, we verify this conjecture.

\begin{theorem}\label{theo:hausdorff dimension}
	Let $X$ be a deterministic Borel subset of $\R^d$. Then we have almost surely
	\begin{equation}\label{eq:hausdorff}
		\dim_{\cH}^{\beta} (X) \geq \frac{\dim_{\cH}^0 (X)}{\theta(\beta,d)}.
	\end{equation}
\end{theorem}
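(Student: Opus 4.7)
The plan is to apply the Frostman energy method in the random metric space $(\R^d, \hat D)$. Fix any $\Delta < \dim_\cH^0(X)/\theta(\beta,d)$ and set $\alpha \coloneqq \Delta\cdot \theta(\beta,d) < \dim_\cH^0(X)$. By the classical Frostman lemma applied to $X$ (together with the inner regularity of the Hausdorff measure), there exists a compactly supported Borel probability measure $\mu$ on $X$ whose Euclidean $\alpha$-Riesz energy $I_\alpha(\mu)\coloneqq \iint \|x-y\|^{-\alpha}\, d\mu(x)\, d\mu(y)$ is finite. I will show that, almost surely, the $\Delta$-Riesz energy of $\mu$ with respect to $\hat D$ is finite as well. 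The standard energy--dimension principle in the separable metric space $(\R^d,\hat D)$ then forces $\dim_\cH^\beta(X)\geq \Delta$ almost surely, and taking a countable intersection of the full-probability events associated to a sequence $\Delta_k \uparrow \dim_\cH^0(X)/\theta(\beta,d)$ yields the theorem.

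The crucial estimate is the pointwise tail bound
\begin{equation*}
\p_\beta\!\left(\hat D(x,y)\leq s\right) \leq C \min\!\left(1,\ s^{2d/\theta(\beta,d)}\|x-y\|^{-2d}\right)
\end{equation*}
for all distinct $x,y\in \R^d$ and all $s>0$. To derive it, I pass to the discrete prelimits from Theorem \ref{theo:ding}. With $u_n\coloneqq \lfloor nx\rfloor$, $v_n\coloneqq \lfloor ny\rfloor$, $z_n\coloneqq v_n - u_n$, one has $\|z_n\|\sim n\|x-y\|$ and $\hat a_n \asymp n^{\theta(\beta,d)}\to\infty$. Applying Theorem \ref{theo:lower tail} to $z_n$ with $\eps \coloneqq s\hat a_n/\|z_n\|^{\theta(\beta,d)}$, whose hypothesis $\eps\|z_n\|^{\theta(\beta,d)} = s\hat a_n \geq 1$ is automatic for large $n$, one obtains
\begin{equation*}
\p_\beta\!\left(\hat a_n^{-1}D(u_n,v_n)\leq s\right) \leq C\left(\frac{s\hat a_n}{\|z_n\|^{\theta(\beta,d)}}\right)^{2d/\theta(\beta,d)} \xrightarrow[n\to\infty]{} C\, s^{2d/\theta(\beta,d)}\|x-y\|^{-2d}
\end{equation*}
whenever $s<\|x-y\|^{\theta(\beta,d)}$. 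A Portmanteau argument applied to the open sets $\{\hat D(x,y)<s+\delta\}$, followed by $\delta\downarrow 0$, transfers this bound to $\hat D$ itself, and the trivial bound $\p_\beta(\hat D\leq s)\leq 1$ covers the remaining range. Integrating this tail estimate against $\Delta s^{-\Delta-1}\, ds$ and using $\Delta<2d/\theta(\beta,d)$ (automatic since $\dim_\cH^0(X)\leq d$) yields
\begin{equation*}
\E_\beta\!\left[\hat D(x,y)^{-\Delta}\right] \leq C'\, \|x-y\|^{-\Delta\theta(\beta,d)} = C'\,\|x-y\|^{-\alpha}.
\end{equation*}

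By Fubini--Tonelli,
\begin{equation*}
\E_\beta\!\left[\iint \hat D(x,y)^{-\Delta}\, d\mu(x)\, d\mu(y)\right] \leq C'\, I_\alpha(\mu) < \infty,
\end{equation*}
so the $\Delta$-Riesz energy of $\mu$ relative to $\hat D$ is almost surely finite, which completes the argument. The main obstacle I anticipate is the transfer from the discrete tail bound to one for the scaling limit $\hat D$: since $\{\hat D(x,y)\leq s\}$ is neither open nor closed, the bound must be pushed through weak convergence by approximation from outside, and one has to make sure that the integrability condition $\eps\|z_n\|^\theta\geq 1$ of Theorem \ref{theo:lower tail}, which is a discretization artifact, does not obstruct the small-$s$ regime in the prelimit. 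A related routine point is to verify the energy--dimension principle in $(\R^d,\hat D)$, which is standard once one uses that $\hat D$ is a genuine metric inducing the Euclidean topology, as established in \cite{ding2023uniqueness}.
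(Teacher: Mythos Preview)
Your argument is correct, but the route is genuinely different from the paper's.

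The paper proves Theorem~\ref{theo:hausdorff dimension} via a \emph{covering} comparison: using Proposition~\ref{propo:main} (the box-count estimate) together with a Borel--Cantelli argument, it shows that almost surely for all large dyadic $m=2^l$ and all $u$, a $\hat D$-ball of radius $\tfrac12 m^{-\theta}$ touching $W_u^{1/m}$ can be covered by at most $Ml$ Euclidean boxes of side $m^{-1}$. This uniform distortion bound lets one convert any $\hat D$-cover of $X$ into a Euclidean cover with controlled $(\theta\eta+\eps)$-content, yielding $\dim_\cH^0(X)\le \theta\,\dim_\cH^\beta(X)$ directly at the level of Hausdorff contents.

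You instead run a \emph{potential-theoretic} argument: starting from a Frostman measure $\mu$ on $X$ with finite Euclidean $\alpha$-energy, you use the lower-tail estimate of Theorem~\ref{theo:lower tail} (transferred to $\hat D$ by weak convergence) to obtain the moment bound $\E_\beta[\hat D(x,y)^{-\Delta}]\le C'\|x-y\|^{-\alpha}$, and then Fubini plus the mass-distribution/energy principle gives $\dim_\cH^\beta(X)\ge\Delta$. Two minor points worth tightening: the ratio $\hat a_n/\|z_n\|^\theta$ is only known to be bounded above and below (from $\hat a_n\asymp n^\theta$), not convergent, so the displayed ``$\to$'' should be an inequality---this does not affect the conclusion; and the range where Theorem~\ref{theo:lower tail} applies is $\eps<1$, which after passing to the limit corresponds to $s<c\|x-y\|^\theta$ for some constant $c$ rather than $c=1$, again harmless.

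What each approach buys: the paper's covering argument is self-contained (no Frostman lemma needed) and yields a \emph{uniform} almost-sure statement about all $\hat D$-balls at all small scales, which is geometrically stronger than what is needed here. Your energy approach is cleaner and more modular, reducing everything to the single pointwise moment bound; however, it consumes the sharper input Theorem~\ref{theo:lower tail} (whose proof via Lemma~\ref{lem:good paths} is more delicate than Proposition~\ref{propo:main}), and it relies on the standard but external machinery of Frostman's lemma and the energy--dimension inequality in general metric spaces.
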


Note that Theorem \ref{theo:hausdorff dimension} and \eqref{eq:ding hausdorff} directly imply that
\begin{equation*}
	\dim_{\cH}^{\beta} \left(O\right) = \frac{d}{\theta(\beta,d)}
\end{equation*}
for any open set $O \subset \R^d$ almost surely,
so the Hausdorff dimension of the metric space $(\R^d,\hat{D})$ equals its ``volume-growth exponent''.\\

\textbf{Notation:} 
We typically consider the collection of open/closed edges as an element $\omega \in \{0,1\}^E$. We say that an edge is open if $\omega(e)=1$ and closed otherwise. When we want to measure the graph distance between two points $x$ and $y$ in the percolation environment defined by $\omega$, we also write $D(x,y;\omega)$. The distance $D_A(x,y)$ is defined as the length of the shortest open path between $x$ and $y$ that lies entirely within $A$. If we look at the ball of radius $k$ around the origin in the environment $\omega$, we also write
\begin{equation*}
	\cB_k(\omega) = \left\{x \in \Z^d : D(\mz,x;\omega)\leq k\right\} .
\end{equation*}
For a metric $\tilde{D}: \R^d \times \R^d \to \left[0,\infty\right)$, we write $B_r(x;\tilde{D}) = \left\{ y \in \R^d : \tilde{D}(x,y)\leq r\right\}$ for the $\tilde{D}$-ball of radius $r$ around $x$. If we do not specify a metric, we refer to the usual Euclidean metric.
For two sets $A,B \subset \Z^d$, we define the indirect distance $D^\star (A,B)$ between $A$ and $B$ as the length of the shortest open path between $A$ and $B$ that does not use a direct edge between $A$ and $B$. For a vertex $u\in \Z^d$ we write
\begin{equation*}
	\deg(u) = \sum_{v\in \Z^d \setminus \{u\}} \mathbbm{1}_{u\sim v}
\end{equation*}
for the degree of $u$. For a vertex $r(u) \in V^\prime$, i.e., an element of the renormalized graph constructed in Section \ref{sec:self sim}, we write
\begin{equation*}
	\deg(r(u)) = \sum_{r(v) \in V^\prime \setminus \{r(u)\}} \mathbbm{1}_{r(u)\sim r(v)}
\end{equation*}
for its degree. We define the {\sl neighborhood} of $r(u)$ by $\cN\left(r(u)\right) = \left\{r(v) \in V^\prime : \|u-v\|_\infty \leq 1\right\}$, and we define the {\sl neighborhood-degree} of $r(u)$ by
\begin{equation}\label{eq:nbh degree}
	\deg^{\cN}(r(u)) = \sum_{r(v) \in \cN(r(u))} \deg(r(v)).
\end{equation}
For a finite set $Z\subset \Z^d$, we define its average degree by
\begin{equation*}
	\overline{\deg}(Z) = \frac{1}{|Z|} \sum_{u\in Z} \deg(u).
\end{equation*}

\section{Proof of the Corollaries}\label{sec:coro proofs}

Let us first see how the corollaries are implied by Theorem \ref{theo:main}. We start with the proof of Corollary \ref{coro1}.

\begin{proof}[Proof of Corollary \ref{coro1} assuming Theorem \ref{theo:main}]
	By Theorem \ref{theo:main} there exists a constant $C< \infty$ such that for all large enough $k,K\in \N$
	\begin{equation}\label{eq:expo decay}
		\p_\beta \left(|\cB_k| \geq C K k^{\frac{d}{\theta(\beta,d)}}\right) \leq 1.5^{-K}. 
	\end{equation}
	Furthermore, we get by Markov's inequality that for fixed $\eps > 0$ one has for all $k \in \N$ with $\eps^{1/d}k^{1/\theta} \geq 1$ that
	\begin{align}
		\notag & \p_\beta \left( |\cB_k| \leq \eps k^{\frac{d}{\theta(\beta,d)}} \right)
		\leq
		\p_\beta \left( \dia \left(\left\{0,\ldots,\lceil \eps^{1/d} k^{1/\theta} \rceil \right\}^d\right) \geq k \right)\\
		\notag &
		=
		\p_\beta \left( \frac{\dia \left(\left\{0,\ldots,\lceil \eps^{1/d} k^{1/\theta} \rceil \right\}^d\right)}{\left(\eps^{1/d} k^{1/\theta}\right)^\theta} \geq \eps^{-\frac{\theta}{d}} \right)
		\\
		&
		\leq \E_\beta \left[\exp\left(\frac{\dia \left(\left\{0,\ldots,\lceil \eps^{1/d} k^{1/\theta} \rceil \right\}^d\right)}{\left(\eps^{1/d} k^{1/\theta}\right)^\theta}\right)\right] \exp \left(-\eps^{-\frac{\theta}{d}}\right) 
		\leq \bar{C} \exp \left(-\eps^{-\frac{\theta}{d}}\right)  \label{eq:markov1}
	\end{align}
	where the constant $\bar{C}$ depends only on the dimension $d$ and $\beta$, but not on $k$ or $\eps$. Such a constant exists by \eqref{eq:stretched upper bound}. So combining \eqref{eq:markov1} and \eqref{eq:expo decay} gives that
	\begin{equation*}
		|\cB_k| \approx_P k^{\frac{d}{\dxp(\beta,d)}}  .
	\end{equation*}
	This also implies that
	\begin{equation}\label{eq:Omega}
		\E_\beta\left[|\cB_k| \right] = \Omega \left(  k^{\frac{d}{\dxp(\beta,d)}} \right).
	\end{equation}
	To get the reverse inequality, note that for large enough $k \in \N$ one has
	\begin{align}\label{eq:O}
		&\notag \E_\beta\left[|\cB_k| \right] = 
		\sum_{j=1}^{\infty} \p_\beta \left( |\cB_k| \geq j \right)
		\leq
		\sum_{i=0}^{\infty} Ck^{\frac{d}{\theta(\beta,d)}} \p_\beta \left( |\cB_k| \geq C i k^{\frac{d}{\theta(\beta,d)}}  \right)\\
		& 
		\leq
		Ck^{\frac{d}{\theta(\beta,d)}}
		\left(K^\prime + \sum_{i=K^\prime}^{\infty}  1.5^{-i}\right)
		= \mathcal{O} \left(  k^{\frac{d}{\dxp(\beta,d)}} \right)
	\end{align}
	for some fixed $K^\prime \in \N$. So \eqref{eq:Omega} and \eqref{eq:O} show that $\E_\beta\left[|\cB_k| \right] = \Theta \left(k^{\frac{d}{\theta(\beta,d)}}\right)$.\\

	Inequality \eqref{eq:markov1} is stated only for $\eps^{1/d}k^{1/\theta}\geq 1$, or equivalently for $\eps k^{d/\theta} \geq 1$. However, it also holds for $\eps k^{d/\theta} < 1$, as the event $\{|\cB_k| < 1\}$ has probability $0$. Thus we get that there exists a constant $\bar{C} = \bar{C}(d,\beta) <\infty$ such that
	\begin{equation}\label{eq:markov2}
		\p_\beta \left( |\cB_k| \leq \eps k^{\frac{d}{\theta(\beta,d)}} \right) \leq 
		\bar{C} \exp \left(-\eps^{-\frac{\theta}{d}}\right) 
	\end{equation}
	for all $\eps > 0$. So for fixed $\delta > 0$, we get that
	\begin{equation}\label{eq:conco1}
		\p_\beta \left( \frac{\log(|\cB_k|)}{\log(k)} \leq \frac{d}{\theta} - \delta \right)
		=
		\p_\beta \left( |\cB_k| \leq k^{-\delta} k^{\frac{d}{\theta}} \right) 
		\overset{\eqref{eq:markov2}}{\leq} \bar{C} \exp \left(- k^{\delta \frac{\theta}{d}} \right).
	\end{equation}
	On the other hand, inequality \eqref{eq:expo decay} implies that
	\begin{equation}\label{eq:conco2}
		\p_\beta \left( \frac{\log(|\cB_k|)}{\log(k)} \geq \frac{d}{\theta} + \delta \right)
		=
		\p_\beta \left( |\cB_k| \geq k^{\delta} k^{\frac{d}{\theta}} \right) 
		\overset{\eqref{eq:expo decay}}{\leq} \exp\left(- c k^{\delta} \right),
	\end{equation}
	where $c>0$ is a small (but positive) constant. A standard Borel-Cantelli argument now implies that $\frac{\log(|\cB_k|)}{\log(k)} \in \left(\frac{d}{\theta} - \delta, \frac{d}{\theta} + \delta\right)$ for all but finitely many $k$ almost surely. Taking $\delta \to 0$ now implies that $\frac{\log(|\cB_k|)}{\log(k)}$ converges to $\frac{d}{\theta}$ almost surely.
\end{proof}

\begin{remark}
	Using the concentration inequalities of \eqref{eq:conco1} and \eqref{eq:conco2} one can also show that $\frac{\log(|\cB_k|)}{\log(k)}$ converges to $\frac{d}{\theta}$ in $L^p$ for all $p\in \left[1,\infty\right)$.
\end{remark}

Next, let us go to the proof of Corollary \ref{coro2}. For the proof of this, we use the {\sl Harris-coupling} for percolation (see \cite[Section 1.3]{heydenreich2017progress}). In this coupling, we start with a collection of independent and identically distributed random variables $\left(U_e\right)_{e\in E}$ that are uniformly distributed on the interval $\left(0,1\right)$. We now define two environments $\omega,\omega_\beta \in \{0,1\}^E$ by
\begin{equation}\label{eq:harris coupling}
	\omega(e) = \mathbbm{1}_{U_e \leq \p(e \text{ open})} \ \ \text{ and } \ \ 
	\omega_\beta(e) = \mathbbm{1}_{U_e \leq \p_\beta(e \text{ open})} .
\end{equation}
This defines a coupling between the measures $\p$ and $\p_\beta$. The important feature of this coupling is its monotonicity. If $\p(e \text{ open}) \leq \p_\beta(e \text{ open})$, and the edge $e$ is open in the environment $\omega$, then $e$ is also open in the environment $\omega_\beta$. With this, we are ready to go to the proof of Corollary \ref{coro2}.

\begin{proof}[Proof of Corollary \ref{coro2} assuming Corollary \ref{coro1}]
	Assume that $$\beta_{\star} = \liminf_{\|x-y\| \to \infty} \|x-y\|^{2d} \p(x\sim y) \in (0,\infty),$$ and let $\eps \in (0,\beta_{\star})$ be arbitrary. Then
	\begin{equation}\label{eq:lower bound 1}
		\p(x\sim y) \geq \frac{\beta_{\star}-\eps/2}{\|x-y\|^{2d}}
	\end{equation}
	for all $x,y$ with $\|x-y\|$ large enough. On the other hand, the kernel $J$ in the definition of $\p_\beta$ satisfies
	\begin{equation*}
		J(x-y) = \int_{x+\left[0,1\right)^d} \int_{y+\left[0,1\right)^d} \frac{1}{\|t-s\|^{2d}} \md t \md s
		=\frac{1}{\|x-y\|^{2d}} + \mathcal{O} \left(\frac{1}{\|x-y\|^{2d+1}}\right),
	\end{equation*}
	as shown in \cite[Example 7.2]{baumler2022distances}. Using that
	\begin{equation*}
		\p_\beta \left(\{x,y\} \text{ open}\right) = 1-e^{-\beta J(x-y)} = \beta J(x-y) + \mathcal{O}(J(x-y)^2) = \frac{\beta}{\|x-y\|^{2d}} + \mathcal{O} \left(\frac{1}{\|x-y\|^{2d+1}}\right)
	\end{equation*}
	we thus get that
	\begin{equation}\label{eq:lower bound 2}
		\p_{\beta_{\star} - \eps} \left(\{x,y\} \text{ open}\right) \leq \frac{\beta_{\star}-\eps/2}{\|x-y\|^{2d}}
	\end{equation}
	for all $x,y$ with $\|x-y\|$ large enough. Say that \eqref{eq:lower bound 1} and \eqref{eq:lower bound 2} hold for all $x,y\in \Z^d$ with $\|x-y\|_1 \geq M$. Now construct the Harris-coupling between the random environments $\omega_{\beta_{\star}-\eps}$ and $\omega$ as described in \eqref{eq:harris coupling}. Remember that the environment $\omega_{\beta_{\star} - \eps}$ is distributed according to the measure $\p_{\beta_{\star} - \eps}$, whereas the environment $\omega$ is distributed according to the measure $\p$. This implies that if $\{x,y\}$ is an edge with $\|x-y\|_1 \geq M$ that is $\omega_{\beta_{\star}-\eps}$-open, then it is also $\omega$-open. If $x,y \in \Z^d$ are such that $\|x-y\|_1 < M$, then the graph distance between $x$ and $y$ in the environment $\omega$ is at most $M$ almost surely, as the nearest-neighbor edges are open in the environment $\omega$ with probability $1$. Thus we get that for any two points $x,y \in \Z^d$ with $\omega_{\beta_{\star}-\eps}(\{x,y\}) = 1$ that $D(x,y;\omega)\leq M$. Using this over shortest paths in the environment $\omega_{\beta_{\star}-\eps}$, one gets that for all $x,y \in \Z^d$
	\begin{equation*}
		D(x,y;\omega) \leq M D(x,y;\omega_{\beta_{\star}-\eps})
	\end{equation*}
	which implies that for all $k\in \N$
	\begin{equation*}
		\cB_{M k}(\omega) \coloneqq  \left\{x \in \Z^d : D(\mz,x; \omega) \leq M k \right\} 
		\supseteq 
		\left\{x \in \Z^d : D(\mz,x; \omega_{\beta_{\star}-\eps}) \leq k \right\} 
		\eqqcolon
		\cB_{k}(\omega_{\beta_{\star}-\eps}).
	\end{equation*}
	As the sequence of sets $\left(\cB_{k}(\omega_{\beta_{\star}-\eps})\right)_{k\in \N_0}$ is distributed like the sequence of sets $\left(\cB_k\right)_{k\in \N_0}$ under the measure $\p_{\beta_{\star}-\eps}$, we get that
	\begin{align*}
		\lim_{k \to \infty} \frac{\log(|\cB_{k}(\omega_{\beta_{\star}-\eps})|)}{\log(k)} = \frac{d}{\theta(\beta_{\star}-\eps,d)}
	\end{align*}
	almost surely, by Corollary \ref{coro1}. Using that $|\cB_{M k}(\omega)| \geq |\cB_{k}(\omega_{\beta_{\star}-\eps})|$, we get that
	\begin{align*}
		\liminf_{k \to \infty} \frac{\log(|\cB_{k}(\omega)|)}{\log(k)} &
		=
		\liminf_{k \to \infty} \frac{\log(|\cB_{Mk}(\omega)|)}{\log(Mk)}
		=
		\liminf_{k \to \infty} \frac{\log(|\cB_{Mk}(\omega)|)}{\log(k)}\\
		&
		\geq
		\liminf_{k \to \infty} \frac{\log(|\cB_{k}(\omega_{\beta_{\star} - \eps})|)}{\log(k)}
		= \frac{d}{\theta(\beta_{\star}-\eps,d)}
	\end{align*}
	almost surely. As $\eps \in (0,\beta_{\star})$ was arbitrary, we get that almost surely
	\begin{equation*}
		\liminf_{k \to \infty} \frac{\log(|\cB_{k}(\omega)|)}{\log(k)} \geq \lim_{\eps \searrow 0} \frac{d}{\theta(\beta_{\star}-\eps,d)}
		=
		\frac{d}{\theta(\beta_{\star},d)}.
	\end{equation*}
	In the last line, we used the continuity of the function $\beta \mapsto \theta(\beta,d)$, see \eqref{eq:theta knowledge}. This shows \eqref{eq:coro 2 1}.
	
	The proof of \eqref{eq:coro 2 2} works similarly, with the roles $\omega_{\beta^\star + \eps}$ and $\omega$ reversed. We omit this proof. Equations \eqref{eq:coro 2 1} and \eqref{eq:coro 2 2} together imply \eqref{eq:coro 2 3}. The results of \eqref{eq:coro 2 4} and \eqref{eq:coro 2 5} follow using a similar argument combined with $\theta(0,d)=1$, respectively $\lim_{\beta \to \infty} \theta(\beta,d) = 0$.
\end{proof}

\section{Proof of Theorems \ref{theo:main}, \ref{theo:lower tail} and \ref{theo:hausdorff dimension}}

\subsection{Previous results}\label{sec:previous}

Before going to the main results, we first need to introduce a few results that were proven in \cite{baumler2022behavior} and \cite{baumler2022distances}. 
The first two results are stated in \cite[Lemmas 5.3 and 5.4]{baumler2022behavior}, see also \cite[Lemma 5.4]{baumler2022distances}. 

\begin{lemma}\label{lem:goodeventB}
	Let $u\in \Z^d$ with $\|u\|_\infty\geq 2$. Let $\mathcal{D}_u (\delta)$ be the following event:
	\begin{align*}
		\mathcal{D}_u (\delta) = \bigcap_{ \substack{ x,y \in V_\mz^{k} : \\ x,y \sim V_u^{k} , x \neq y} } \left\{D_{V_\mz^{k}}\left(x,y\right) \geq \delta k^{\dxp(\beta,d)} \right\} \text .
	\end{align*}
	For every $\beta >0$, there exists a function $f_1(\delta)$ with $f_1(\delta) \underset{\delta\to 0}{\longrightarrow} 1$ such that for all large enough $k \geq k(\delta)$, and all $u \in \Z^d$ with $\|u\|_\infty \geq 2$
	\begin{equation*}
		\p_{\beta} \left(\mathcal{D}_u (\delta) \ | \ V_\mz^{k} \sim V_u^{k} \right) \geq f_1(\delta) \text .
	\end{equation*}
\end{lemma}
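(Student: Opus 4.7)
The plan is to bound the conditional probability
\[
\p_\beta\!\left(\mathcal{D}_u(\delta)^c \mid V_\mz^k \sim V_u^k\right) = \frac{\p_\beta(\mathcal{D}_u(\delta)^c)}{\p_\beta(V_\mz^k \sim V_u^k)}
\]
(using that $\mathcal{D}_u(\delta)^c \subseteq \{V_\mz^k \sim V_u^k\}$) from above by $C''(\beta,d)\,\tilde g(\delta)$ for some function $\tilde g$ with $\tilde g(\delta) \to 0$ as $\delta \to 0$, and then take $f_1(\delta) \coloneqq 1 - C''\tilde g(\delta)$. The denominator I will bound below using the self-similarity of Section~\ref{sec:self sim}: one has $\p_\beta(V_\mz^k \sim V_u^k) = \p_\beta(\mz \sim u)$, and the elementary estimate $J(\mz-u) \geq c\|u\|^{-2d}$ yields $\p_\beta(V_\mz^k \sim V_u^k) \geq c(\beta)\|u\|^{-2d}$ uniformly for $\|u\|_\infty \geq 2$.

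For the numerator I will apply a union bound. If $\mathcal{D}_u(\delta)$ fails then there exist distinct $x,y \in V_\mz^k$ and (possibly coinciding) $x', y' \in V_u^k$ such that both $\{x,x'\}$ and $\{y,y'\}$ are open while $D_{V_\mz^k}(x,y) < \delta k^{\theta}$. Since $x \neq y$ the two cross-edges are distinct, and, crucially, the status of cross-edges is independent of all edges inside $V_\mz^k$, hence of $D_{V_\mz^k}(x,y)$. Using $\p_\beta(\{x,x'\}\text{ open}) \leq \beta J(x-x') \leq C\beta(\|u\|k)^{-2d}$ for $x \in V_\mz^k$, $x' \in V_u^k$, $\|u\|_\infty \geq 2$, and summing out $x', y'$ yields
\[
\p_\beta(\mathcal{D}_u(\delta)^c) \leq \frac{(C\beta)^2}{\|u\|^{4d}k^{2d}} \sum_{x \neq y \in V_\mz^k} \p_\beta\!\left(D_{V_\mz^k}(x,y) < \delta k^{\theta}\right).
\]
After dividing by the denominator the $\|u\|$-dependence cancels, so it remains to show that the pair-sum is at most $\tilde g(\delta)\,k^{2d}$ for some $\tilde g(\delta) \to 0$.

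The main obstacle is this pair-sum bound, because \eqref{eq:theo:exponent1} is only a tightness statement and does not directly give a quantitative lower-tail estimate for $D(x,y)$. I will use $D_{V_\mz^k} \geq D$ together with \eqref{eq:theo:exponent1}, which guarantees that for every $\gamma > 0$ there exists $c_\gamma > 0$ with $\p_\beta(D(\mz,v) \leq c_\gamma \|v\|^{\theta}) \leq \gamma$ for all $v \neq \mz$; rearranging this produces a function $g:(0,\infty) \to [0,1)$ with $g(\eps) \to 0$ as $\eps \to 0$ such that $\p_\beta(D(x,y) \leq \eps \|x-y\|^{\theta}) \leq g(\eps)$ uniformly in $x \neq y$. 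This is informative only once $\|x-y\|^{\theta}$ is much larger than $\delta k^{\theta}$, so I will split $V_\mz^k \times V_\mz^k$ at $\|x-y\| = \delta^{1/(2\theta)} k$: far pairs each contribute at most $g(\delta^{1/2})$ and there are $\leq k^{2d}$ of them, while near pairs number at most $C\delta^{d/(2\theta)} k^{2d}$ and contribute trivially at most $1$ each. Setting $\tilde g(\delta) \coloneqq g(\delta^{1/2}) + C\delta^{d/(2\theta)}$ closes the estimate and substituting back yields the lemma. The delicate point is precisely that the Euclidean split is engineered to sidestep, rather than resolve, the lack of a quantitative lower tail for $D$ at this point in the paper; a sharp version of such a bound is what will later be established as Theorem~\ref{theo:lower tail}.
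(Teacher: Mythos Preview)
The paper does not prove this lemma here; it is quoted from \cite{baumler2022behavior} (Lemmas~5.3--5.4) and \cite{baumler2022distances} (Lemma~5.4), so there is no in-paper argument to compare against directly. Your proof is correct: self-similarity gives $\p_\beta(V_\mz^k\sim V_u^k)=\p_\beta(\mz\sim u)\geq c(\beta)\|u\|^{-2d}$; the cross-edges $\{x,x'\},\{y,y'\}$ are distinct (since $x\neq y$ and $V_\mz^k\cap V_u^k=\emptyset$) and independent of the internal metric $D_{V_\mz^k}$; and the Euclidean split at scale $\delta^{1/(2\theta)}k$ together with the uniform lower-tail tightness extracted from \eqref{eq:theo:exponent1} handles the pair sum. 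One cosmetic point: after dividing numerator by denominator the $\|u\|$-dependence does not literally cancel---a factor $\|u\|^{-2d}$ survives---but since $\|u\|\geq\|u\|_\infty\geq 2$ this is harmlessly bounded by $1$, and the resulting bound is uniform in $u$ as required.

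A caveat on logical order is worth recording. You take \eqref{eq:theo:exponent1} as input, but in \cite{baumler2022distances} lemmas of exactly this type are ingredients \emph{toward} the proof of that tightness/exponent result. Within the present paper both facts are imported as black boxes, so your derivation is internally consistent; it would, however, be circular as a replacement for the original argument, which must produce the ``good block'' estimates before any exponent $\theta$ is known to exist.
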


\begin{lemma}\label{lem:goodeventA}
	Let $u,v\in \Z^d$ with $u\neq v \neq \mz$ and $\|u\|_\infty \geq 2$. Let $\mathcal{A}_{u,v}(\delta)$ be the following event:
	\begin{align*}
		\mathcal{A}_{u,v} (\delta) 
		= 
		\bigcap_{ \substack{x \in V_\mz^{k}: \\ x \sim V_u^{k}} } \ \bigcap_{ \substack{ y \in V_\mz^{k}: \\ y \sim V_v^{k}} } \left\{D_{V_\mz^{k}}\left(x,y\right) \geq \delta k^{\dxp(\beta,d)} \right\} \text .
	\end{align*}
	For every $\beta >0$, there exists a function $f_2(\delta)$ with $f_2(\delta) \underset{\delta\to 0}{\longrightarrow}1$ such that for all large enough $k \geq k(\delta)$, and all $u,v \in \Z^d\setminus \{\mz\}$ with $\|u\|_\infty \geq 2$
	\begin{equation*}
		\p_{\beta} \left(\mathcal{A}_{u,v}(\delta) \ | \ V_u^{k} \sim V_\mz^{k} \sim V_v^{k} \right) \geq f_2(\delta) \text .
	\end{equation*}
\end{lemma}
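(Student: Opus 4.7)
The plan is to adapt the argument used for Lemma \ref{lem:goodeventB} by exploiting the fact that $\mz, u, v$ are pairwise distinct. This makes the three edge families (i) internal to $V_\mz^k$, (ii) between $V_\mz^k$ and $V_u^k$, and (iii) between $V_\mz^k$ and $V_v^k$ disjoint, and hence mutually independent under $\p_\beta$. Writing $A := \{V_\mz^k \sim V_u^k\}$ and $B := \{V_\mz^k \sim V_v^k\}$, conditional on $A \cap B$ the sets $\partial_u := \{x \in V_\mz^k : x \sim V_u^k\}$, $\partial_v := \{y \in V_\mz^k : y \sim V_v^k\}$, and the internal random metric $D_{V_\mz^k}$ are jointly independent.

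First, a union bound together with this factorization will give
\begin{align*}
	\p_\beta\!\left(\mathcal{A}_{u,v}(\delta)^c \mid A \cap B\right) \leq \sum_{x,y \in V_\mz^k} a_x\, b_y\, \p_\beta\!\left(D_{V_\mz^k}(x,y) < \delta k^{\theta(\beta,d)}\right),
\end{align*}
where $a_x := \p_\beta(x \sim V_u^k)/\p_\beta(A)$ and $b_y := \p_\beta(y \sim V_v^k)/\p_\beta(B)$. Using the explicit kernel \eqref{eq:selfsim kernel}, the self-similarity identity $\p_\beta(V_\mz^k \sim V_u^k) = 1 - e^{-\beta J(u)}$, and the hypothesis $\|u\|_\infty \geq 2$ (which ensures the boxes are separated), a direct kernel computation yields the uniform estimate $a_x \leq C(\beta,d)/k^d$ for every $x \in V_\mz^k$, and symmetrically $b_y \leq C(\beta,d)/k^d$. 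The essential point is that the total kernel mass between $V_\mz^k$ and $V_u^k$ is comparable to $\p_\beta(A)$, and a single $x \in V_\mz^k$ carries at most an $O(1/k^d)$ fraction of it.

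For the remaining double sum I would use a volume estimate inside $V_\mz^k$ based on the tightness of the distance: $D_{V_\mz^k}(x,y) \geq D(x,y)$ and translation invariance give
\begin{align*}
	\sum_{y \in V_\mz^k} \p_\beta\!\left(D_{V_\mz^k}(x,y) < \delta k^{\theta(\beta,d)}\right) \leq \sum_{z \in V_\mz^k - x} \p_\beta\!\left(D(\mz, z) < \delta k^{\theta(\beta,d)}\right).
\end{align*}
Given $\eps > 0$, \eqref{eq:theo:exponent1} provides $c = c(\eps,\beta,d) > 0$ such that $\p_\beta(D(\mz, z) < c\|z\|^{\theta(\beta,d)}) \leq \eps$ for all large $\|z\|$. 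Splitting the sum at $\|z\| = (\delta/c)^{1/\theta(\beta,d)} k$ bounds it by $\eps |V_\mz^k| + C(\delta/c)^{d/\theta(\beta,d)} k^d \leq 2\eps k^d$ once $\delta$ is small. Substituting this into the double-sum bound yields
\begin{align*}
	\p_\beta\!\left(\mathcal{A}_{u,v}(\delta)^c \mid A \cap B\right) \leq C(\beta,d)^2 k^{-2d} \cdot k^d \cdot 2\eps k^d = 2C(\beta,d)^2\, \eps,
\end{align*}
which is arbitrarily small by choosing $\delta$ small enough, proving the lemma with $f_2(\delta) := 1 - 2C(\beta,d)^2\,\eps(\delta)$.

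The main obstacle is the uniform estimate $a_x \leq C(\beta,d)/k^d$: a naive guess is that $a_x$ ought to concentrate sharply on the face of $V_\mz^k$ adjacent to $V_u^k$, which would make this simple bound fail. Showing that the concentration is only by a bounded multiplicative factor amounts to uniformly estimating $\int_{x+[0,1)^d}\int_{V_u^k+[0,1)^d}\|s-t\|^{-2d}\,\mathrm{d}s\,\mathrm{d}t$ over all $x \in V_\mz^k$, and is exactly where the hypothesis $\|u\|_\infty \geq 2$ is used to rule out the degenerate case of adjacent boxes.
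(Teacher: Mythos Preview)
There is a genuine gap: the claim ``symmetrically $b_y \leq C(\beta,d)/k^d$'' fails when $\|v\|_\infty = 1$, which the lemma explicitly allows (only $\|u\|_\infty \geq 2$ is assumed; the hypothesis on $v$ is merely $v \neq \mz$ and $v \neq u$). In that case $V_\mz^k$ and $V_v^k$ are $\|\cdot\|_\infty$-adjacent, so $\p_\beta(B)=1$ and every $y$ on the shared face satisfies $b_y=\p_\beta(y\sim V_v^k)=1$. Your final display then collapses to a bound of order $\eps\sum_y b_y$, and $\sum_y b_y \geq k^{d-1}$ (indeed $\sum_y b_y \asymp k^{d-1}$ for $d\geq 2$ and $\asymp \log k$ for $d=1$), so the estimate diverges with $k$. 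Your closing paragraph correctly identifies $\|u\|_\infty\geq 2$ as what makes the $a_x$ bound work, but then applies the same reasoning to $v$ where no such separation hypothesis is available. The asymmetry in the statement is precisely the point: only the $u$-endpoint is guaranteed to be delocalized at density $O(k^{-d})$.

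To repair the argument for $\|v\|_\infty=1$ you cannot union-bound over pairs $(x,y)$. Using only $a_x\leq C k^{-d}$ and conditioning on $\partial_v$ and the internal metric reduces the problem to showing
\[
\E_\beta\!\left[\big|\{x\in V_\mz^k : D_{V_\mz^k}(x,\partial_v)<\delta k^\theta\}\big|\,\Big|\,B\right]=o(k^d)\quad\text{as }\delta\to 0,
\]
but since $\partial_v$ contains an entire $(d-1)$-dimensional face this does \emph{not} follow from summing your point-ball volume estimate over face points (that overcounts by a factor $k^{d-1}$). One needs an additional ingredient controlling the graph-metric neighborhood of a face---for instance, covering the face by $O(\delta^{-(d-1)/\theta})$ sub-boxes of side $\delta^{1/\theta}k$ and invoking the diameter tightness \eqref{eq:theo:exponent2} on each, with the tail parameters tuned against the number of sub-boxes. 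Note that the present paper does not prove this lemma but imports it from \cite{baumler2022behavior}.
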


The next result was stated in \cite[Lemma 5.5]{baumler2022behavior}, respectively \cite[Lemma 4.6]{baumler2022distances}.

For the next result, we consider conditional probabilities, conditioned on the graph $G^\prime = (V^\prime, E^\prime)$ that was constructed in Section \ref{sec:self sim}. Also remember the definition of the neighborhood-degree in \eqref{eq:nbh degree}.
\begin{lemma}\label{lem:goodeventD}
	Let $\mathcal{D}(\delta)$ be the event
	\begin{align*}
		\mathcal{D}(\delta) \coloneqq \left\{ D^\star \left( V_\mz^{k} , \bigcup_{u \in \Z^d : \| u \|_\infty \geq 2} V_u^{k} \right) \geq \delta k^{\dxp(\beta,d)} \right\} \text .
	\end{align*}
	For every $\beta >0$ there exists a function $f_3(\delta)$ with $f_3(\delta) \underset{\delta\to 0}{\longrightarrow}1$ such that for all large enough $k \geq k(\delta)$, and all realizations of $G^\prime$ with $\deg^{\cN} \left(r(\mz)\right) \leq 9^d 100 \mu_\beta$
	\begin{equation*}
		\p_{\beta} \left( \mathcal{D}(\delta) \ | \ G^\prime \right) \geq f_3(\delta) \text .
	\end{equation*}
\end{lemma}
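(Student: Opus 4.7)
The plan is to show that, under the conditioning $\deg^{\mathcal{N}}(r(\mz)) \leq M := 9^d \cdot 100 \mu_\beta$, every open indirect path from $V_\mz^k$ to $\bigcup_{\|u\|_\infty \geq 2} V_u^k$ must have length at least $\delta k^{\theta(\beta,d)}$, and the control goes via within-box traversal distances bounded below by Lemmas \ref{lem:goodeventA} and \ref{lem:goodeventB}. The conditioning bounds the total number of inter-box edges of $G'$ incident to the neighborhood $\bigcup_{\|w\|_\infty \leq 1} V_w^k$ by $M$, hence the number of ``interface vertices'' in the neighborhood (endpoints of such edges inside it) is at most $M$.

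I would start by putting any shortest indirect path $\pi = (x_0, \ldots, x_\ell)$ in canonical form: truncating at its first hit of the far region, one may assume that $x_0, \ldots, x_{\ell-1}$ all lie in the neighborhood. The indirect condition then forces $x_{\ell-1}$ to lie in some neighbor box $V_v^k$ (with $\|v\|_\infty = 1$) and the final edge to go from $x_{\ell-1}$ to $x_\ell \in V_u^k$ with $\|u\|_\infty \geq 2$, since an edge directly from $V_\mz^k$ to a far box is prohibited. Decomposing $\pi$ into intra-box segments separated by inter-box jumps, each non-trivial segment between two distinct interface vertices $x, y$ in a box $V_w^k$ has length at least $D_{V_w^k}(x, y)$, so it suffices to show that every relevant interface pair has within-box distance at least $\delta k^\theta$ with high conditional probability.

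The main step is a case analysis on which external boxes each interface vertex connects to. For a same-box interface pair $(x, y)$ in $V_w^k$ with $x \sim V_{w_1}^k$ and $y \sim V_{w_2}^k$, if at least one of $V_{w_1}^k, V_{w_2}^k$ lies at $\|\cdot\|_\infty$-distance $\geq 2$ from $w$, then the shifted versions of Lemmas \ref{lem:goodeventA}, \ref{lem:goodeventB} (applied by translation, with $V_w^k$ playing the role of $V_\mz^k$) immediately give $D_{V_w^k}(x, y) \geq \delta k^\theta$ with conditional probability at least $f_1(\delta) \wedge f_2(\delta)$. Taking a union bound over the at most $O(M^2)$ interface pairs in the neighborhood produces a conditional failure probability of order $M^2\big((1 - f_1(\delta)) + (1 - f_2(\delta))\big)$, which tends to $0$ as $\delta \to 0$. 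On the complement event, every intra-box segment with distinct interface endpoints has length $\geq \delta k^\theta$, so $\ell \geq \delta k^\theta$.

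The main obstacle is the ``super-bad'' residual configuration in which every inter-box edge of $\pi$ links pairwise-adjacent boxes of the neighborhood — so that no shifted application of Lemmas \ref{lem:goodeventA}, \ref{lem:goodeventB} is directly available on any segment. Here one argues more carefully: conditional on $G'$, the endpoints of each open inter-box edge are approximately uniformly distributed in their respective boxes (with only a mild bias toward the ``facing'' side), so for every pair of distinct same-box interface vertices the Euclidean distance $\|x - y\|$ is of order $k$ except on an event of probability $o_\delta(1)$; one then uses the trivial bound $D_{V_w^k}(x, y) \geq D(x, y)$ together with the self-similar scaling $D(x, y) \approx_P \|x - y\|^{\theta}$ to conclude that $D_{V_w^k}(x, y) \geq \delta k^\theta$ with conditional probability $1 - o_\delta(1)$. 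A separate degeneracy, where two interface vertices coincide into a single ``bridge'' vertex, occurs with conditional probability $O(1/k^d)$ per pair and hence $O(M^2/k^d)$ in total, vanishing as $k \to \infty$. Assembling these estimates yields the desired $f_3(\delta)$ with $f_3(\delta) \to 1$ as $\delta \to 0$, completing the proof.
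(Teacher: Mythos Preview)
First, note that the paper does not prove this lemma: it is quoted verbatim from \cite[Lemma~5.5]{baumler2022behavior} and \cite[Lemma~4.6]{baumler2022distances}, so there is no in-paper argument to compare your sketch against.

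Your argument has a genuine gap in its treatment of edges between \emph{adjacent} boxes. You assert that the condition $\deg^{\cN}(r(\mz))\le M$ bounds the number of ``interface vertices'' in the neighborhood by $M$. But $\deg^{\cN}$ is a degree in the \emph{renormalized} graph $G'$: a single $G'$-edge between two adjacent boxes $V_v^k,V_w^k$ with $\|v-w\|_\infty=1$ sits on top of $\Theta(k^{d-1})$ deterministically open nearest-neighbor edges in $\Z^d$, all of whose endpoints lie on the shared face. Hence the set of interface vertices has size $\Theta(k^{d-1})$, not $O(M)$; your union bound over $O(M^2)$ pairs is vacuous, and the $O(M^2/k^d)$ estimate for coincidence of two interface vertices is meaningless. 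The same problem kills your handling of the ``super-bad'' case: when all inter-box jumps of $\pi$ are between adjacent boxes, the endpoints are not ``approximately uniformly distributed'' in the box---they are pinned to a boundary face---so the appeal to $D(x,y)\approx_P\|x-y\|^\theta$ fails, since two interface vertices in the same box can be $O(1)$ apart (for instance near a corner where three boxes meet).

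The missing idea is that Lemmas~\ref{lem:goodeventB} and~\ref{lem:goodeventA} are tailored to long jumps ($\|u\|_\infty\ge 2$) and say nothing about nearest-neighbor transitions between adjacent boxes; an annulus-crossing lower bound cannot be reduced to intra-box distances via those two lemmas alone. The argument in the cited references instead exploits that any indirect path must connect $V_\mz^k$ to a vertex in the annulus at Euclidean distance of order $k$ from $V_\mz^k$ (either an approximately uniform endpoint of a long edge, or a vertex on the outer face of a neighbor box if the exit edge is nearest-neighbor), and then proves a point-to-set chemical-distance lower bound of order $k^\theta$ for that macroscopic displacement. Your box-by-box decomposition discards precisely this global information.
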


The next result concerns connected sets in the long-range percolation graph. It was proven in \cite[Lemma 3.2]{baumler2022distances} and \cite[Equation (24)]{baumler2022distances}.

\begin{lemma}\label{lem:connnectedsetsinLRP}
	Let $\mathcal{CS}_k= \mathcal{CS}_k\left(\Z^d\right)$ be all connected subsets of the long-range percolation graph with vertex set $\Z^d$, which are of size $k$ and contain the origin $\mz$. We write $\mu_\beta$ for $\E_\beta \left[\deg(\mz)\right]$. Then for all $\beta>0$ 
	\begin{equation}\label{eq:connected set count bound}
		\E_\beta \left[ |\mathcal{CS}_k| \right] \leq 4^k \mu_\beta^k
	\end{equation}
	and
	\begin{align*}
		\p_\beta \left(\exists Z \in \mathcal{CS}_k : \overline{\deg}(Z) \geq 20 \mu_\beta\right) \leq e^{-4k\mu_\beta}.
	\end{align*}
\end{lemma}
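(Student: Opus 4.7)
The strategy for both parts is a depth-first spanning-tree enumeration, standard for random-graph/percolation combinatorics. Starting from a set $Z \ni \mz$ of size $k$, the event $Z \in \mathcal{CS}_k$ entails the existence of an open spanning tree on $Z$, so
\begin{equation*}
    \p_\beta(Z \in \mathcal{CS}_k) \leq \sum_{T \text{ spanning tree of } Z} \p_\beta(T \text{ open}),
\end{equation*}
and hence $\E_\beta[|\mathcal{CS}_k|] \leq \sum_{(Z,T)} \p_\beta(T \text{ open})$, where the sum runs over pairs $(Z,T)$ consisting of a vertex set $Z \ni \mz$ of size $k$ together with a plane spanning tree $T$ on $Z$ rooted at $\mz$.

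To evaluate this sum I would parametrize each pair by a depth-first traversal: a Dyck word $w \in \{F,B\}^{2(k-1)}$ (with $k-1$ forward and $k-1$ backward steps, never more $B$'s than $F$'s) together with a choice of neighbor in $\Z^d$ at each forward step. For each fixed word, summing $\prod_{e \in T} p_\beta(e)$ over the choice of neighbor at a given forward step telescopes by translation invariance to a factor $\sum_{v \neq u} p_\beta(\{u,v\}) = \mu_\beta$. Since the number of admissible Dyck words is the Catalan number $C_{k-1} \leq 4^{k-1}$, this gives $\E_\beta[|\mathcal{CS}_k|] \leq 4^{k-1} \mu_\beta^{k-1} \leq 4^k \mu_\beta^k$, where the last inequality uses $\mu_\beta \geq 2d$ (automatic since the nearest-neighbor edges are a.s.\ open).

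For the second bound, fix a pair $(Z,T)$, let $E_Z^\circ$ denote the set of edges with at least one endpoint in $Z$ that do not belong to $T$, and write $X$ for the number of open edges in $E_Z^\circ$. When $T$ is open, one has $\sum_{u \in Z} \deg(u) \leq 2(k-1) + 2X$, so the event $\overline{\deg}(Z) \geq 20 \mu_\beta$ forces $X \geq 9 k \mu_\beta$ (for which $\mu_\beta \geq 1$ already suffices). Crucially $X$ depends only on edges disjoint from those of $T$ and is therefore independent of $\mathbbm{1}_{\{T \text{ open}\}}$; as a sum of independent Bernoullis with mean at most $k \mu_\beta$, Chernoff's inequality yields $\p_\beta(X \geq 9 k \mu_\beta) \leq e^{-c k \mu_\beta}$ for some explicit constant $c$ (one may take $c = 9\log 9 - 8 > 10$).

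Putting everything together via a union bound over $(Z,T)$,
\begin{equation*}
    \p_\beta\bigl(\exists Z \in \mathcal{CS}_k : \overline{\deg}(Z) \geq 20 \mu_\beta\bigr)
    \leq \sum_{(Z,T)} \p_\beta(T \text{ open}) \, \p_\beta(X \geq 9 k \mu_\beta)
    \leq 4^k \mu_\beta^k \, e^{-c k \mu_\beta},
\end{equation*}
which collapses to $e^{-4 k \mu_\beta}$ once $\mu_\beta \geq 2d$ is used to absorb the polynomial prefactor $4^k \mu_\beta^k$. The one step where care is required is the independence of $\mathbbm{1}_{\{T \text{ open}\}}$ and $X$: this is the whole reason for isolating a spanning tree $T$ and counting only the \emph{extra} open edges in $E_Z^\circ$, rather than estimating the total number of open edges incident to $Z$ directly, which would lose the independence from the tree.
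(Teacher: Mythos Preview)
The paper does not give its own proof of this lemma; it is quoted verbatim from \cite[Lemma 3.2 and Equation (24)]{baumler2022distances}. Your argument is correct and is precisely the standard depth-first/Catalan enumeration that underlies such bounds (and, in fact, the one used in the cited reference): encode a rooted spanning tree by a Dyck word of length $2(k-1)$, pick up a factor $\mu_\beta$ at each forward step by translation invariance, and for the degree bound exploit that the extra open edges touching $Z$ are independent of the tree edges. The only cosmetic remark is that the step $4^{k-1}\mu_\beta^{k-1}\le 4^k\mu_\beta^k$ needs nothing more than $\mu_\beta\ge 1/4$, so the appeal to $\mu_\beta\ge 2d$ there is unnecessary (though it is genuinely needed later to absorb $4^k\mu_\beta^k$ into the exponential).
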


Next, we define a notion of good sets inside the graph with vertex set $\Z^d$. Again, let $G^\prime=(V^\prime,E^\prime)$ be the graph constructed in Section \ref{sec:self sim}. Remember the definition of the events $\mathcal{D}_u(\delta)$,  $\mathcal{A}_{u,v}(\delta)$, and $\mathcal{D}(\delta)$ from Lemmas \ref{lem:goodeventB}, \ref{lem:goodeventA}, and \ref{lem:goodeventD}.

\begin{definition}
	For a small $\delta>0$, we call a vertex $r(w)$ and the underlying block $V_w^{k}$ $\delta$-{\sl good}, if all the translated events of $\mathcal{D}(\delta), \mathcal{D}_u(\delta)$, and $\mathcal{A}_{u,v}(\delta)$ occur, i.e., if
	\begin{align}\label{eq:good1}
		\bigcap_{\substack{ x \in V_w^{k} : \\ x \sim V_u^{k}} } \ \bigcap_{\substack{ y \in V_w^{k} : \\ y \sim V_v^{k}   }  } \left\{D_{V_w^{k}}\left(x,y\right) \geq \delta k^{\dxp(\beta,d)} \right\}
	\end{align}
	for all $u\neq v$ for which $w \neq v$, $V_u^{k} \sim V_w^{k} \sim V_v^{k}$ and $\|u-w\|_\infty \geq 2$, and if
	\begin{align}\label{eq:good2}
		\bigcap_{\substack{ x,y \in V_w^{k} : \\ x ,y \sim V_u^{k} , x\neq y } } 
		\left\{D_{V_w^{k}} (x,y) \geq \delta k^{\dxp(\beta,d)} \right\}
	\end{align}
	for all $u$ with $\|u-w\|_\infty \geq 2$ and $V_u^{k}\sim V_w^{k}$, and if
	\begin{align}\label{eq:good3}
		D^\star \left( V_w^{k} , \bigcup_{u \in \Z^d : \| u - w \|_\infty \geq 2} V_u^{k} \right) \geq \delta k^{\dxp(\beta,d)} .
	\end{align} 
\end{definition}

Suppose that a path crosses a $\delta$-good set $V_w^{k}$, in the sense that it starts somewhere outside of the set $\bigcup_{u \in \Z^d : \| u - w \|_\infty \leq 1} V_u^{k} $, then goes to the set $V_w^{k}$, and then leaves the set $\bigcup_{u \in \Z^d : \| u - w \|_\infty \leq 1} V_u^{k} $ again. When the path enters the set $V_w^{k}$ at the vertex $x$, coming from some a block $V_u^{k}$ with $\|u-w\|\geq 2$, the path needs to walk a distance of at least $\delta k^{\dxp(\beta,d)}$ to reach a vertex $y\in V_w^{k}$ that is connected to the complement of $V_w^{k}$, because of \eqref{eq:good1} and \eqref{eq:good2}. When the path enters the set $V_w^{k}$ from a block $V_v^{k}$ with $\|v-w\|_\infty=1$, then the path crosses the annulus between $V_w^{k}$ and $\bigcup_{u \in \Z^d : \| u - w \|_\infty \geq 2} V_u^{k}$. So in particular, it needs to walk a distance of at least $\delta k^{\dxp(\beta,d)}$ to cross this annulus, because of \eqref{eq:good3}. Overall, we see that the path needs to walk a distance of at least $\delta k^{\dxp(\beta,d)}$ within the set $\bigcup_{u \in \Z^d : \| u - w \|_\infty \leq 1} V_u^{k}$ in order to cross the set $V_w^{k}$.
Define $f(\delta) \coloneqq \min \left\{f_1(\delta) , f_2(\delta) , f_3(\delta)\right\}$.
Let $\delta$ be small enough so that
\begin{align}\label{eq:deltasmall}
	& 9^{2d} 5000 \mu_{\beta+1}^2 \left(1-f(\delta)\right) \leq \left(32\mu_{\beta + 1}\right)^{-9^d400\mu_{\beta+1}}, \text{ and }\\
	& \label{eq:deltasmall2}\left(9^{2d} 5000 \mu_{\beta}^2 \left(1-f(\delta)\right)\right)^{\frac{1}{27^d200\mu_{\beta}}} \leq e^{-5\mu_\beta }.
\end{align}
Such a $\delta >0$ exists, as $f(\delta)$ tends to $1$ for $\delta \to 0$. We fix such a $\delta>0$ for the rest of the paper. The fact that we consider $\mu_{\beta+1}$ in \eqref{eq:deltasmall} is not relevant for this paper but was needed in \cite{baumler2022behavior}, so we stick to this formulation here. Condition \eqref{eq:deltasmall2} was not needed in \cite{baumler2022behavior} but is needed in Lemma \ref{lem:good paths} below.
\begin{definition}
	From here on we call a block $V_w^{k}$ {\sl good} if it is $\delta$-good for the specific choice of $\delta$ in \eqref{eq:deltasmall} and \eqref{eq:deltasmall2}, and we call a vertex $r(w)\in G^\prime$ {\sl good} if the underlying block $V_w^{k}$ is good.
\end{definition}
For a set $Z \subset G^\prime$, we are interested in the number of good vertices inside this set. In \cite[Lemma 5.11]{baumler2022behavior} it was proven that each connected set contains a linear number (in the size of the set) of good vertices with high probability. When there is a linear number of good vertices, then there are also at least two good vertices. Furthermore, it was proven that there is a linear number of {\sl separated good} vertices, which are a special collection of good vertices. We do not need the property of separation in this paper, so we will not go further into this. So the statement of \cite[Lemma 5.11]{baumler2022behavior} directly implies the following result.

\begin{lemma}\label{lem:good sets}
	Let $G^\prime$ be the graph that results from contracting boxes of the form $V_w^{k}$. Then, for large enough $K$, one has
	\begin{align}\label{eq:connectedset goodbound}
		\p_{\beta} \left( \exists Z \in \mathcal{CS}_K\left(G^\prime\right) \text{with less than $2$ good vertices}  \right) \leq 3 \cdot 2^{-K}.
	\end{align}
\end{lemma}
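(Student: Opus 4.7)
The plan is to combine a first-moment bound over connected sets in $G^\prime$ with a conditional independence argument across the disjoint blocks $V_w^k$. As a first reduction, Lemma \ref{lem:connnectedsetsinLRP} tells us that outside an event of probability at most $e^{-4K\mu_\beta}$ (which is much smaller than $2^{-K}$), every $Z \in \mathcal{CS}_K$ satisfies $\overline{\deg}(Z) \leq 20\mu_\beta$; I may therefore restrict to connected sets of bounded average degree. For such a $Z$, a Markov-type counting shows that at least a constant fraction of the vertices $r(w) \in Z$ satisfy $\deg^{\cN}(r(w)) \leq 9^d \cdot 100 \mu_\beta$, and these ``low-degree'' vertices are precisely those to which Lemma \ref{lem:goodeventD} applies.

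The second step is to condition on all inter-block edges (in particular on $G^\prime$) and exploit that the within-block edges are independent across distinct blocks, so the events $\{r(w) \text{ is good}\}$ are conditionally independent over distinct $w$. For a single low-degree vertex $r(w)$, a union bound over the $\mathcal{O}((\deg^{\cN}(r(w)))^2)$ pairs $(u,v)$ appearing in \eqref{eq:good1}--\eqref{eq:good3}, together with Lemmas \ref{lem:goodeventB}, \ref{lem:goodeventA}, and \ref{lem:goodeventD}, yields a per-vertex bad probability
\begin{equation*}
\p_\beta\bigl(r(w) \text{ is bad} \mid \text{inter-block edges}\bigr) \leq 9^{2d} \cdot 5000 \mu_\beta^2 (1-f(\delta)),
\end{equation*}
and by conditional independence the probability that all but at most one low-degree vertex of $Z$ is bad is bounded by the corresponding product, with a $\binom{K}{1}$ combinatorial factor for the choice of the ``surviving'' good vertex.

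To close the argument I would use the enumeration built into Lemma \ref{lem:connnectedsetsinLRP}, which bounds the expected number of connected sets of size $K$ containing $\mz$ by $(4\mu_\beta)^K$. The conditions \eqref{eq:deltasmall} and \eqref{eq:deltasmall2} on $\delta$ are calibrated precisely so that, when the per-vertex bad probability above is raised to the power $cK$ (with $c>0$ the fraction of low-degree vertices) and multiplied by the combinatorial factor $(4\mu_\beta)^K$, the product is dominated by $2^{-K}$. Adding in the exceptional high-degree contribution and absorbing combinatorial constants then gives the stated $3 \cdot 2^{-K}$ bound.

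The main obstacle, in my view, is translating the bound $\overline{\deg}(Z) \leq 20\mu_\beta$ on the average degree of $Z$ into a lower bound on the number of vertices of $Z$ with bounded \emph{neighborhood}-degree $\deg^{\cN}$, since the latter sums contributions from vertices potentially outside $Z$. A natural route is the elementary inequality $\sum_{r(w) \in Z} \deg^{\cN}(r(w)) \leq 3^d \sum_{r(v) \in Z \cup \partial Z} \deg(r(v))$, followed by the observation that $Z \cup \partial Z$ can still be extended to a connected set of size at most $(3^d+1)K$ with controlled average degree; keeping the constants tight enough to fit within the thresholds \eqref{eq:deltasmall}--\eqref{eq:deltasmall2}, while simultaneously tracking the two layers of randomness (intra- versus inter-block edges), is the delicate part.
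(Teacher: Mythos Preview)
The paper does not prove this lemma from scratch: it simply quotes \cite[Lemma~5.11]{baumler2022behavior}, noting that a linear number of (separated) good vertices in any $Z\in\mathcal{CS}_K(G^\prime)$ was already established there, and that ``linear'' trivially implies ``at least $2$'' for large $K$. Your proposal is an attempt to reconstruct the underlying argument, and the broad strategy---restrict to sets of bounded average degree via Lemma~\ref{lem:connnectedsetsinLRP}, find many vertices of bounded neighborhood-degree, bound the per-vertex bad probability using Lemmas~\ref{lem:goodeventB}--\ref{lem:goodeventD}, and beat the $(4\mu_\beta)^K$ enumeration---is indeed the one used in \cite{baumler2022behavior} and mirrored in this paper's proof of Lemma~\ref{lem:good paths}.

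There is, however, a genuine gap in your second step. You assert that, after conditioning on the inter-block edges, the events $\{r(w)\text{ is good}\}$ are conditionally independent across distinct $w$. This is false: the event in \eqref{eq:good3} involves $D^\star\big(V_w^k,\bigcup_{\|u-w\|_\infty\ge 2}V_u^k\big)$, which depends on the intra-block edges of all blocks $V_v^k$ with $\|v-w\|_\infty\le 1$, not just those inside $V_w^k$. More generally, whether $r(w)$ is good depends on every edge with at least one endpoint in $\bigcup_{r(v)\in\cN(r(w))}V_v^k$ (compare the independence argument in the proof of Lemma~\ref{lem:good paths}). Consequently, goodness of $r(w)$ and $r(w')$ is \emph{not} conditionally independent whenever $\cN(r(w))\sim\cN(r(w'))$ in $G^\prime$. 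Simply taking a product over the low-degree vertices of $Z$ is therefore unjustified.

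The missing ingredient is the extraction of a \emph{separated} subset: one iteratively builds a set $\mathbb{LI}\subset Z$ of vertices with $\deg^{\cN}\le 9^d\cdot 100\mu_\beta$ whose neighborhoods are pairwise non-adjacent in $G^\prime$ (this is exactly the notion of ``separated good vertices'' alluded to before the statement of the lemma, and the construction is spelled out in the proof of Lemma~\ref{lem:good paths}). Each vertex admitted to $\mathbb{LI}$ blocks at most $O(\mu_\beta)$ others, so $|\mathbb{LI}|\ge cK$; on $\mathbb{LI}$ the goodness events \emph{are} conditionally independent, and the product bound then goes through against \eqref{eq:deltasmall}. The obstacle you flag about passing from $\overline{\deg}(Z)$ to control of $\deg^{\cN}$ is real but handled by the same device: one works with $Z^{\cN}=\bigcup_{r(v)\in Z}\cN(r(v))$, a connected set of size at most $3^dK$, and applies the average-degree bound there (cf.\ the proof of Lemma~\ref{lem:good paths}).
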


We need to introduce and prove one technical statement regarding a quantity related to the isoperimetric profile of the percolation cluster. For a set $Z\subset \Z^d$, we define $\cS_1(Z) = \left\{x \in \Z^d\setminus Z : x\sim Z\right\}$ as the set of points outside of $Z$ that are connected to $Z$ by an edge. Using this notation, the following result holds.

\begin{lemma}\label{lem:isoper}
	Define the event $\cW_K$ by
	\begin{equation*}
		\cW_K^c = \bigcup_{k=1}^{K} \left\{ \text{There exists $Z \in \mathcal{CS}_k$ s.t. $|\cS_1(Z)| \geq 99 \mu_\beta K$} \right\}.
	\end{equation*}
	Then for all large enough $K$ one has
	\begin{equation*}
		\p_\beta \left(\cW_K^c\right) \leq e^{-50 \mu_\beta K}.
	\end{equation*}
\end{lemma}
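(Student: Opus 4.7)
The plan is to run a union bound over $k\in \{1,\ldots,K\}$ and over the possible shapes $Z$, exploiting an independence splitting. For a deterministic finite set $Z\subset \Z^d$ containing $\mz$ with $|Z|=k$, the event $\{Z\in \mathcal{CS}_k\}$ depends only on the edges with both endpoints in $Z$, whereas $|\cS_1(Z)|$ depends only on the edges between $Z$ and $\Z^d\setminus Z$. These two sets of edges are disjoint, so under $\p_\beta$ the events $\{Z\in \mathcal{CS}_k\}$ and $\{|\cS_1(Z)|\geq 99\mu_\beta K\}$ are independent. The problem thus reduces to a tail bound on $|\cS_1(Z)|$ for each individual candidate $Z$, summed against the counting estimate $\E_\beta[|\mathcal{CS}_k|]\leq (4\mu_\beta)^k$ of Lemma \ref{lem:connnectedsetsinLRP}.

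For the tail bound I would use the pointwise domination
\begin{equation*}
	|\cS_1(Z)|\leq \sum_{u\in Z}\sum_{v\in \Z^d\setminus Z}\mathbbm{1}_{u\sim v},
\end{equation*}
which holds because each $v\in \cS_1(Z)$ contributes at least one indicator. The right-hand side is a sum of independent Bernoulli variables of total mean at most $\sum_{u\in Z}\E_\beta[\deg(u)]=k\mu_\beta\leq K\mu_\beta$. A standard exponential Markov inequality, applied with parameter $\lambda=\log(99K/k)$, then yields
\begin{equation*}
	\p_\beta\bigl(|\cS_1(Z)|\geq 99\mu_\beta K\bigr)\leq \exp(-c_0\mu_\beta K)
\end{equation*}
uniformly over all $Z$ of size $k\leq K$, with explicit constant $c_0=99(\log 99-1)\approx 356$.

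Combining the two inputs produces
\begin{equation*}
	\p_\beta(\cW_K^c)\leq \sum_{k=1}^{K}\E_\beta[|\mathcal{CS}_k|]\,e^{-c_0\mu_\beta K}\leq \sum_{k=1}^{K}(4\mu_\beta)^k\,e^{-c_0\mu_\beta K}.
\end{equation*}
Since $\mu_\beta\geq 2d$ is bounded away from $0$, the factor $(4\mu_\beta)^k\leq (4\mu_\beta)^K$ contributes at most $\exp(K\log(4\mu_\beta))$, which is dwarfed by $\exp(-c_0\mu_\beta K)$, leaving ample room for the advertised slack of $50\mu_\beta K$ once $K$ is large enough to absorb the prefactor $K$. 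The only subtlety is bookkeeping of constants: the number $99$ in the statement is tailored precisely so that the Chernoff rate $c_0\mu_\beta K$ beats the combinatorial cost $(4\mu_\beta)^k$ and still leaves the prescribed $50\mu_\beta K$ slack. No probabilistic input beyond Lemma \ref{lem:connnectedsetsinLRP} and an exponential Markov bound on a sum of independent Bernoullis is required.
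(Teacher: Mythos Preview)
Your proposal is correct and follows essentially the same route as the paper: union bound over $k$ and over candidate sets $Z$, the independence splitting between internal edges (connectivity) and boundary edges ($\cS_1(Z)$), the domination $|\cS_1(Z)|\leq \sum_{z\in Z}\sum_{u\notin Z}\mathbbm{1}_{z\sim u}$, and then a Bernoulli exponential-moment bound combined with the counting estimate $\E_\beta[|\mathcal{CS}_k|]\leq (4\mu_\beta)^k$. The only cosmetic difference is that the paper uses the cruder $\lambda=1$ exponential Markov bound $\p(U\geq C)\leq e^{e\E[U]-C}$ rather than your optimized Chernoff parameter, which still comfortably clears the required $50\mu_\beta K$ slack.
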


Before going to the proof, we introduce the following elementary inequality. Let $\left(U_{i}\right)_{i\in \N}$ be independent Bernoulli random variables and let $U= \sum_{i=1}^{\infty} U_i$ be their sum. Then
\begin{align}\label{eq:exp moment bernoulli}
	\E\left[e^{U}\right] & = \E\left[e^{\sum_{i\in \N} U_i}\right] = \prod_{i\in \N} \E\left[e^{U_i}\right]  \leq \prod_{i\in \N} \left(1+e\E\left[U_i\right]\right) \leq \prod_{i\in \N} e^{e\E\left[U_i\right]} = e^{e\E\left[U\right]} \ 
\end{align}
and this already implies, by Markov's inequality, that for any $C>0$
\begin{align}\label{eq:exp moment bernoulli prob bound}
	\p\left( U > C  \right) = \p\left( e^U > e^{C} \right) \leq \E \left[ e^U\right] e^{-C} \overset{\eqref{eq:exp moment bernoulli}}{\leq} e^{e\E\left[U\right]-C} .
\end{align}

\begin{proof}[Proof of Lemma \ref{lem:isoper}]
	By a union bound over $k=1,\ldots,K$ one gets 
	\begin{equation}\label{eq:union bound k}
		\p_\beta \left(\cW_K^c\right) \leq \sum_{k=1}^{K} \p_\beta \left(\text{There exists $Z \in \mathcal{CS}_k$ s.t. $|\cS_1(Z)| \geq 99 \mu_\beta K$} \right).
	\end{equation}
	Using a further union bound over all possible sets $Z\subset \Z^d$ of size $k$ containing the origin one gets
	\begin{equation*}
		\p_\beta \left(\text{$\exists Z \in \mathcal{CS}_k$ s.t. $|\cS_1(Z)| \geq 99 \mu_\beta K$} \right)
		\leq 
		\sum_{Z\ni \mz: |Z|=k} \p_\beta \left(Z \text{ connected and } |\cS_1(Z)| \geq 99 \mu_\beta K\right) .
	\end{equation*}
	The important thing to notice now is that for fixed a fixed set $Z\subset \Z^d$ the events that $Z$ is connected and that $|\cS_1(Z)| \geq 99 \mu_\beta K$ are independent. Indeed, whether $Z$ is connected depends only on edges with both ends in $Z$, whereas the event that $|\cS_1(Z)| \geq 99 \mu_\beta K$ depends only on edges with at least one endpoint outside of $Z$. For a fixed set $Z$, the size of the set $|\cS_1(Z)|$ is dominated by
	\begin{equation*}
		|\cS_1(Z)| \leq \sum_{z\in Z} \sum_{u \notin Z} \mathbbm{1}_{z \sim u} \eqqcolon Y_Z
	\end{equation*}
	and the random variables $\left(\mathbbm{1}_{z \sim u}\right)_{z\in Z, u \notin Z}$ are independent Bernoulli-distributed random variables. Furthermore $\E \left[Y_Z\right] \leq |Z|\E_\beta\left[\deg(\mz)\right]$.	
	Combining these observations we get that
	\begin{align*}
		& \p_\beta \left(\text{$\exists Z \in \mathcal{CS}_k$ s.t. $|\cS_1(Z)| \geq 99 \mu_\beta K$} \right)
		\leq 
		\sum_{Z\ni \mz: |Z|=k} \p_\beta \left(Z \text{ connected} \right) \p_\beta \left( |\cS_1(Z)| \geq 99 \mu_\beta K\right)\\
		&
		\leq 
		\sum_{Z\ni \mz: |Z|=k} \p_\beta \left(Z \text{ connected} \right) \p_\beta \left( Y_Z \geq 99 \mu_\beta K\right)\\
		&
		\overset{\eqref{eq:exp moment bernoulli prob bound}}{\leq} 
		\sum_{Z\ni \mz: |Z|=k} \p_\beta \left(Z \text{ connected} \right) e^{e\E_\beta \left[Y_Z\right]-99 \mu_\beta K}\\
		&
		\leq
		e^{eK \mu_\beta -99 \mu_\beta K}
		\sum_{Z\ni \mz: |Z|=k} \p_\beta \left(Z \text{ connected} \right)
		\overset{\eqref{eq:connected set count bound}}{\leq} 
		e^{eK \mu_\beta -99 \mu_\beta K}
		4^K \mu_\beta^K \leq e^{-75 \mu_\beta K}.
	\end{align*}
	Plugging this into \eqref{eq:union bound k} implies the result.
\end{proof}

\subsection{The main proposition}

In this section, we present and prove a proposition that is extremely useful in the proofs of Theorems \ref{theo:main} and \ref{theo:hausdorff dimension}. In this proposition we count how many boxes of the form $V_u^k$ the set $B_r(V_\mz^k; D)$ typically intersects. 

\begin{proposition}\label{propo:main}
	For $k\in \N$, we define the {\sl $k$-box-count} of the set $B_r(V_\mz^k; D)$ by
	\begin{equation*}
		X_k \left(B_r(V_\mz^k; D)\right) \coloneqq \left| \left\{u \in \Z^d : V_u^k \cap B_r(V_\mz^k; D) \neq \emptyset\right\} \right|
		=
		\left| \left\{u \in \Z^d : D(V_u^k, V_\mz^k)\leq r\right\} \right|.
	\end{equation*}
	Then for all $\beta > 0$ one has for the choice of $\delta$ in \eqref{eq:deltasmall},\eqref{eq:deltasmall2} that for all large enough $k,K$
	\begin{equation*}
		\p_\beta \left( X_k \left(B_{\delta k^\theta}(V_\mz^k; D)\right) > 3^d 100 \mu_\beta K \right) \leq 4 \cdot 2^{-K}.
	\end{equation*}
\end{proposition}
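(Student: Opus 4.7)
The plan is to bound $|S|$, where $S := \{u \in \Z^d : D(V_u^k, V_\mz^k) \leq \delta k^\theta\}$ and $|S| = X_k(B_{\delta k^\theta}(V_\mz^k; D))$, on the intersection of two high-probability events drawn from Lemma \ref{lem:good sets} and Lemma \ref{lem:isoper} applied to the renormalized graph $G^\prime$ of Section \ref{sec:self sim}; by self-similarity $G^\prime$ has the same distribution as the original long-range percolation graph on $\Z^d$, so both lemmas apply verbatim. I first observe that $S$ is a connected subset of $V^\prime$ containing $r(\mz)$: any shortest path in the original graph from $V_\mz^k$ to $V_u^k$ (for $u \in S$) traces out a walk in $G^\prime$, and every intermediate box it visits automatically lies in $S$.

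Next I exploit the ``barrier property'' described after the definition of a $\delta$-good block: an open path of length at most $\delta k^\theta$ starting at $V_\mz^k$ cannot fully cross the 3x3 neighborhood $\cN(r(w))$ of any \emph{far} good block -- meaning a good $r(w)$ with $\|w\|_\infty \geq 2$, so that $V_\mz^k \notin \cN(r(w))$ -- since a full crossing alone consumes the entire length budget $\delta k^\theta$. This motivates the decomposition $S = S_{\mathrm{core}} \sqcup S_{\mathrm{shell}}$, where $S_{\mathrm{core}}$ is the connected component of $r(\mz)$ in the subgraph of $G^\prime$ induced by those vertices of $S$ that are either equal to $r(\mz)$ or are non-good. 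With this definition $S_{\mathrm{core}}$ contains at most one good vertex (only $r(\mz)$, and only if it happens to be good), so Lemma \ref{lem:good sets} applied to $G^\prime$ forces $|S_{\mathrm{core}}| < K$ on an event of probability at least $1 - 3\cdot 2^{-K}$: otherwise a connected $K$-subset of $S_{\mathrm{core}}$ containing $r(\mz)$ would be an element of $\mathcal{CS}_K(G^\prime)$ with fewer than two good vertices.

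For the shell, I trace each $r(u) \in S_{\mathrm{shell}}$ back to $V_\mz^k$ along a shortest path. The first vertex outside $S_{\mathrm{core}}$ on this path must be good -- else it would extend the bad-component containing $r(\mz)$ -- and $G^\prime$-adjacent to $S_{\mathrm{core}}$, hence it lies in $\cS_1(S_{\mathrm{core}})$. The barrier then pins $r(u)$ inside the 3x3 neighborhood $\cN(r(w))$ of this first good boundary vertex $r(w)$. Combining with Lemma \ref{lem:isoper} applied to $G^\prime$, which on an event of probability at least $1 - e^{-50 \mu_\beta K}$ gives $|\cS_1(S_{\mathrm{core}})| < 99 \mu_\beta K$, one obtains $|S_{\mathrm{shell}}| \leq 3^d \cdot 99 \mu_\beta K$ and therefore
\begin{equation*}
|S| \;\leq\; (K-1) + 3^d \cdot 99 \mu_\beta K \;\leq\; 3^d \cdot 100 \mu_\beta K
\end{equation*}
(using $\mu_\beta \geq 2d$, so that $1 \leq 3^d \mu_\beta$) on an event of probability at least $1 - 3\cdot 2^{-K} - e^{-50 \mu_\beta K} \geq 1 - 4\cdot 2^{-K}$ for $K$ large.

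The step I expect to be the main obstacle is the ``shell-trapping'' claim that every $r(u) \in S_{\mathrm{shell}}$ really sits inside $\cN(r(w))$ for some $r(w) \in \cS_1(S_{\mathrm{core}})$. When the first good boundary vertex $r(w)$ encountered is far good, the barrier cleanly traps the continuation of the path in $\cN(r(w))$, apart from the rare full-crossing scenario in which the path exits $\cN(r(w))$ at its very last step; this edge case has to be absorbed either by a mild enlargement of the covering or by a bounded-neighborhood-degree side event. The more delicate case is when $r(w)$ is a near good vertex ($\|w\|_\infty \leq 1$): the barrier then gives no information since $V_\mz^k$ already sits inside $\cN(r(w))$, but near good positions are at most $3^d$ in number, so their contribution can be handled either by absorbing them into $S_{\mathrm{core}}$ from the outset or by a short combinatorial argument using the remaining length budget past $r(w)$.
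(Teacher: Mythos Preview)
Your approach is essentially the paper's: both arguments work in the renormalized graph $G'$, isolate the connected component $\overline{A}$ (your $S_{\mathrm{core}}$) of $r(\mz)$ through bad vertices, bound its size by Lemma~\ref{lem:good sets}, bound its $G'$-boundary by Lemma~\ref{lem:isoper}, and then trap $S$ in a $\cN$-thickening using the barrier property of good blocks. The two probability estimates combine to give the $4\cdot 2^{-K}$ bound exactly as you compute.

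The one substantive difference is in how the trapping set is built, and this is precisely where your acknowledged ``main obstacle'' lives. The paper takes $A^\star=\bigcup_{r(v)\in\overline{A}\cup\cS_1(\overline{A})}\cN(r(v))$, i.e.\ the $\cN$-thickening of \emph{both} the core and its boundary, and argues the contrapositive: for $r(u)\notin A^\star$ it takes the loop-erasure $r(\mz)=r(u_0),\ldots,r(u_j)=r(u)$ of the $G'$-projection of any path, finds the first good interior vertex $r(u_i)$, and splits on whether $\|u_i-u_{i+1}\|_\infty\geq 2$ or $=1$. In the second case the crucial point is that $r(u_i)\in A'$ forces $r(u_{i+1})\in A^\star$, so $r(u)\notin A^\star$ guarantees the path must still exit $\cN(r(u_i))$, triggering the annulus bound \eqref{eq:good3}. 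Your smaller covering $S_{\mathrm{core}}\cup\bigcup_{r(w)\in\cS_1(S_{\mathrm{core}})}\cN(r(w))$ loses this automatic exit-forcing, which is why the near-good-vertex case and the ``path returns to $S_{\mathrm{core}}$ after the first good $r(w)$'' scenario become genuinely awkward. Your proposed fixes (absorb the $\leq 3^d$ near vertices into the core, or a residual-budget argument) do not obviously close the gap with only the ``at least two good vertices'' version of Lemma~\ref{lem:good sets} available. The clean resolution is simply to enlarge the covering as the paper does---this costs nothing in the final count since $|\overline{A}|+|\cS_1(\overline{A})|\leq 100\mu_\beta K$ already, and $3^d\cdot 100\mu_\beta K$ is the target bound anyway---and to pass to the loop-erased path so that $u_{i-1},u_i,u_{i+1}$ are distinct, which disposes of the degenerate entry/exit coincidences you flag.
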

\begin{proof}
	Define $G^\prime$ as the graph where we renormalized all boxes of the form $V_w^k$. Assume that
	\begin{align*}
		\cX_K \coloneqq \left\{\text{All sets $Z\in \mathcal{CS}_K(G^\prime)$ have at least $2$ good vertices}\right\} \cap \cW_K
	\end{align*}
	holds. Lemmas \ref{lem:good sets} and \ref{lem:isoper} directly imply that $\p_\beta \left( \cX_K^c \right) \leq 4 \cdot 2^{-K}$ for all large enough $K$. Thus, we are left to show that on the event $\cX_K$ the inequality $ X_k \left(B_{\delta k^\theta}(V_\mz^k; D)\right) \leq 3^d 100 \mu_\beta K$ holds.
	Assume that the event $\cX_K$ occurs.
	Define the set $A=\{r(v)\in G^\prime: r(v) \text{ is bad}\}$ and the set $\overline{A}=\left\{r(u)\in G^\prime : r(u) \overset{A\cup\{r(\mz)\}}{\longleftrightarrow} r(\mz) \right\}$. By the definition of the event $\cX_K$, we have $|\overline{A}|\leq K$. Further, we define the sets $A^\prime$ and $A^\star$ by
	\begin{align*}
		&A^\prime = \left\{r(v) \in G^\prime : r(v)\sim \overline{A}\right\} \cup \overline{A} \ \ \text{ and } \ \  A^\star = \bigcup_{r(v) \in A^\prime} \cN(r(v)).
	\end{align*}
	By the definition of the event $\cX_K$, respectively the definition of the event $\cW_K$, we get that $|A^\prime| \leq 99 \mu_\beta K + |\overline{A}| \leq 100 \mu_\beta K$ and thus $|A^\star| \leq 3^d 100 \mu_\beta K$. Now let $r(u) \in G^\prime$ be such that $r(u)\notin A^\star$. We will now argue that $D(V_\mz^k, V_u^k) \geq \delta k^\theta$, which shows the claim. \\
	
	Let $P$ be a path on $\Z^d$ between $V_\mz^k$ and $V_u^k$. Say that the path goes through the blocks $V_\mz^k, V_{a_1}^k, \ldots , V_{a_l}^k$, in this order, and listing blocks several times if the path leaves a block and returns again. Then $r(\mz),r(a_1),\ldots,r(a_l)=r(u)$ is a path in $G^\prime$. Let $r(\mz),r(u_1),\ldots,r(u_j)=r(u)$ be the loop-erasure of this path. 
	Then $\mz=u_0,u_1,\ldots,u_j=u$ is a sequence of vertices such that $V_{u_{i-1}}^k \sim V_{u_i}^k$ for $i=1,\ldots,j$. As $r(u) \notin A^\star$, there needs to exist at least one block $V_{u_i}^k$ with $i\in \{1,\ldots,j-1\}$ which is good. Let $i$ be the index of the first good block. 
	
	If $\|u_i-u_{i+1}\|_\infty \geq 2$, then the path $P$ needs to have a length of at least $\delta k^\theta$, as $V_{u_i}^k$ is a good block, and thus there are no $x,y\in V_{u_i}^k$ with $x\neq y$ and $x\sim \left(V_{u_i}^k\right)^c, y\sim \left(V_{u_i}^k\right)^c$.
	
	If $\|u_i-u_{i+1}\|_\infty = 1$, then the path $P$ crosses the annulus $\bigcup_{w:\|w-u_i\|_\infty = 1} V_w^k$ and thus has a length of at least $\delta k^\theta$, as $V_{u_i}^k$ is a good block. The path needs to cross the annulus as $V_{u_i}^k$ was assumed to be the first good block among $V_{u_1}^k,\ldots, V_{u_{j-1}}^k$, so in particular $r(u_{i}) \in A^\prime$ and thus $r(u_{i+1}) \in A^\star$, but $r(u) \notin A^\star$.
\end{proof}

\subsection{Proof of Theorem \ref{theo:main}}

With the previous results, we are ready to go to the proof of Theorem \ref{theo:main}.

\begin{proof}[Proof of Theorem \ref{theo:main}]
	The size of the ball $\cB_{\delta k^\theta}$ is less than $k^d$ times the number of blocks of the form $V_u^k$ that the ball $\cB_{\delta k^\theta}$ intersects.
	Using the definition of the $k$-box-count in Proposition \ref{propo:main}, we directly get that for all $k\in \N$
	\begin{equation*}
		|\cB_{\delta k^\theta}| \leq k^d \left|X_k\left(B_{\delta k^\theta} \left(V_\mz^k; D\right)\right)\right|
	\end{equation*}
	and using Proposition \ref{propo:main} we get for all $k,K$ large enough that
	\begin{equation*}
		\p_\beta \left( |\cB_{\delta k^\theta}| > k^d 3^d 100 \mu_\beta K \right)
		\leq 
		\p_\beta \left( X_k \left(B_{\delta k^\theta}(V_\mz^k; D)\right) > 3^d 100 \mu_\beta K \right) \leq 4 \cdot 2^{-K}.
	\end{equation*}
	For $n \in \N$, define $k_n\in \N$ by $k_n \coloneqq \lceil (n/\delta)^{1/\theta} \rceil$, so that $\delta k_n^\theta \approx n$.
	Then we get that for a large enough constant $C$ one has for all large enough $n$ and $K$ that
	\begin{equation*}
		\p_\beta \left( |\cB_n| > C n^{\frac{d}{\theta}} K \right)
		\leq
		\p_\beta \left( |\cB_{\delta k_n^\theta}| > k_n^d 3^d 100 \mu_{\beta} K \right)
		\leq 1.5^{-K}.
	\end{equation*}
\end{proof}

\subsection{Proof of Theorem \ref{theo:lower tail}}

Whereas Lemma \ref{lem:good sets} is a result about connected sets in the random graph, we need a similar result for paths instead of sets. The proof of it uses similar techniques as the proofs of \cite[Lemma 5.9]{baumler2022behavior} and \cite[Lemma 5.5]{baumler2022distances}. However, there is a stark difference, namely that we condition that certain blocks are connected. We use the same notation as in Section \ref{sec:previous}, i.e., we identify blocks of the form $V_u^k$ to a single vertex $r(u)$ and call the resulting graph $G^\prime = \left( V^\prime , E^\prime \right)$. Further, we assume that $k$ is large enough so that the results of Section \ref{sec:previous} (in particular, Lemmas \ref{lem:goodeventB}-\ref{lem:goodeventD}) hold.

\begin{lemma}\label{lem:good paths}
	Let $G^\prime = (V^\prime, E^\prime)$ be the graph that results from contracting boxes of the form $V_u^N$ into vertices $r(u)$.
	Let $r(\mz)=r(v_0),r(v_1),\ldots,r(v_K)$ be distinct vertices in $V^\prime$. Then for all large enough $K,N$ one has
	\begin{equation*}
		\p_\beta \left( \text{$r(v_{3^{d}}),\ldots,r(v_{K-3^d})$ are bad } \Big| r(v_0)\sim r(v_{1}) \sim \ldots \sim r(v_K) \right) \leq e^{-3 \mu_\beta K}.
	\end{equation*}
\end{lemma}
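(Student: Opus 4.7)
My plan is to adapt the arguments of \cite[Lemma 5.9]{baumler2022behavior} and \cite[Lemma 5.5]{baumler2022distances}, the essential new ingredient being the conditioning on a fixed path of block-edges being open. First, I greedily extract a sparse subset $S \subseteq \{3^d, \ldots, K - 3^d\}$ of size $|S| \geq K / (27^d \cdot 200 \mu_\beta)$ such that the $1$-neighborhoods $\cN(r(v_i))$ are pairwise disjoint for distinct $i \in S$; distinctness of the $v_i$ and the fact that each greedy choice excludes at most $5^d$ further indices make this possible. The buffer of size $3^d$ at each end is used to control endpoint effects, where the path may cluster near $v_0 = \mz$ or $v_K$.

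Next, I condition on the renormalized graph $G^\prime$, which absorbs the ``path is open'' conditioning. For each $i \in S$, on the event $\{\deg^{\cN}(r(v_i)) \leq 9^d \cdot 100 \mu_\beta\}$, a union bound over the at most $9^{2d} \cdot 5000 \mu_\beta^2$ constituent events of the definition of goodness (cf.\ \eqref{eq:good1}--\eqref{eq:good3}) combined with Lemmas \ref{lem:goodeventB}--\ref{lem:goodeventD} yields
\begin{equation*}
\p_\beta\bigl(r(v_i) \text{ bad} \mid G^\prime\bigr) \leq 9^{2d} \cdot 5000 \mu_\beta^2 \bigl(1 - f(\delta)\bigr).
\end{equation*}
Because the $\cN(r(v_i))$ are pairwise disjoint, these events depend on disjoint edge sets and are conditionally independent given $G^\prime$. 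Taking the product over $i \in S$ and invoking \eqref{eq:deltasmall2} produces a bound of the form $e^{-5 \mu_\beta K}$, comfortably stronger than what is required.

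To handle the event that $\deg^{\cN}(r(v_i)) > 9^d \cdot 100 \mu_\beta$ for some $i \in S$, I will use the exponential moment bound \eqref{eq:exp moment bernoulli prob bound}. Conditional on the path being open, $\deg^{\cN}(r(v_i))$ is at most $2 \cdot 3^d$ coming from path edges incident to $\cN(r(v_i))$ (since $\cN(r(v_i))$ contains at most $3^d$ path vertices by injectivity) plus an independent sum of Bernoullis of total mean at most $3^d \mu_\beta$. Therefore $\p_\beta(\deg^{\cN}(r(v_i)) > 9^d \cdot 100 \mu_\beta \mid \text{path open}) \leq e^{-c_1 \mu_\beta}$ for some large constant $c_1$, and by conditional independence across $i \in S$ a Chernoff-type bound shows that the number of $i \in S$ violating this degree condition exceeds $|S|/2$ with probability at most $e^{-3 \mu_\beta K}$. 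Applying the single-vertex bound of the previous paragraph to the remaining at least $|S|/2$ indices then closes the argument.

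The hardest step will be rigorously verifying the conditional independence given $G^\prime$: the event $\mathcal{D}(\delta)$ is defined through the indirect distance $D^\star$ and therefore, \emph{a priori}, depends on edges arbitrarily far from $V_{v_i}^N$. Lemma \ref{lem:goodeventD} sidesteps this by conditioning on $G^\prime$, but for the factorization across different $i \in S$ one must additionally verify that the relevant indirect paths can be localized to small neighborhoods of each $V_{v_i}^N$. A secondary delicacy is that the path conditioning does not substantially inflate any $\deg^{\cN}(r(v_i))$: at most $3^d$ path vertices lie in $\cN(r(v_i))$, so the path contributes at most $2 \cdot 3^d$ to the relevant sum, which is dwarfed by the threshold $9^d \cdot 100 \mu_\beta$.
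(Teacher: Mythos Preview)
Your overall strategy matches the paper's, but the crucial independence step is flawed. You build a deterministic index set $S$ by requiring only that the neighborhoods $\cN(r(v_i))$ be pairwise \emph{disjoint}, and then assert that ``these events depend on disjoint edge sets and are conditionally independent given $G^\prime$.'' This is false. Goodness of $r(v_i)$ depends on every edge of $\Z^d$ with at least one endpoint in $\bigcup_{r(w)\in \cN(r(v_i))} V_w^N$ (this is exactly what the localization of $\cD(\delta)$ gives). An edge running from a block in $\cN(r(v_i))$ to a block in $\cN(r(v_j))$ has one endpoint in each of these vertex sets and therefore belongs to \emph{both} edge collections; given $G^\prime$ (which only records that such an edge exists), the random locations of its endpoints influence the goodness of $r(v_i)$ and of $r(v_j)$ simultaneously. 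Disjointness of the neighborhoods does not prevent this; what is needed is $\cN(r(v_i))\nsim \cN(r(v_j))$ in $G^\prime$.

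The paper handles this by making the sparse set depend on $G^\prime$: it first proves (Claim~\ref{claim:path degree}) that, conditionally on the path being open, $\sum_{r(w)\in Z^{\cN}}\deg(r(w))\leq 20\mu_\beta|Z^{\cN}|$ except with probability $e^{-6\mu_\beta K}$, and then greedily constructs $\mathbb{LI}\subset\{3^d+1,\ldots,K-3^d-1\}$ by admitting $r(v_i)$ only when $\deg^{\cN}(r(v_i))\leq 9^d 50\mu_\beta$ \emph{and} $\cN(r(v_i))$ is not $G^\prime$-adjacent to any previously admitted neighborhood. Each admission blocks at most $3^d\deg^{\cN}(r(v_i))\leq 27^d 50\mu_\beta$ further indices, so on the degree event $|\mathbb{LI}|\geq K/(27^d200\mu_\beta)$; and by construction there are no $G^\prime$-edges between neighborhoods of distinct members of $\mathbb{LI}$, which is precisely what yields conditional independence. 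Your degree-control step has a related issue: the variables $\deg^{\cN}(r(v_i))$ for $i\in S$ are \emph{not} conditionally independent given the path (edges between two neighborhoods contribute to both), so your Chernoff argument over $S$ does not go through as stated; the paper instead bounds the global sum $\sum_{e\in F_2}\mathbbm{1}_{e\text{ open}}$ by a single application of \eqref{eq:exp moment bernoulli prob bound}. In short, replace ``disjoint'' by ``not $G^\prime$-adjacent'' and let the sparse set be measurable with respect to $G^\prime$; this forces the global degree bound to come first.
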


Before proving this lemma, we introduce the following claim.

\begin{claim}\label{claim:path degree}
	Let $r(\mz)=r(v_0),r(v_1),\ldots,r(v_K)$ be distinct vertices in $V^\prime$. Define the sets $Z=\left\{r(v_0),\ldots,r(v_K)\right\}$ and 
	\begin{equation*}
		Z^\cN = \bigcup_{r(u) \in Z} \cN\left(r(u)\right).
	\end{equation*}
	Then
	\begin{equation*}
		\p_\beta \left( \sum_{r(v) \in Z^{\cN} } \deg\left(r(v)\right) > 20 \mu_{\beta} \left|Z^{\cN}\right| \Big| r(v_0)\sim r(v_{1}) \sim \ldots \sim r(v_K) \right) \leq e^{-6 \mu_\beta K}.
	\end{equation*}
\end{claim}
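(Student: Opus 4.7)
The plan is to exploit the self-similarity of the model (Section \ref{sec:self sim}), which implies that in $G^\prime$ the edge indicators $\{\mathbbm{1}_{r(u)\sim r(v)}\}$ for distinct unordered pairs are independent Bernoullis with the same marginals as in the original long-range percolation graph. Conditioning on the path $r(v_0)\sim\ldots\sim r(v_K)$ therefore only forces $K$ specific indicators to be equal to $1$, while the remaining indicators retain their independent unconditional distributions. This reduces the claim to an unconditional exponential-moment tail bound, in the spirit of the argument behind \eqref{eq:exp moment bernoulli}--\eqref{eq:exp moment bernoulli prob bound}.

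Concretely, decompose
\[
U \coloneqq \sum_{r(v)\in Z^\cN} \deg(r(v)) \;=\; 2 U_{\mathrm{in}} + U_{\mathrm{out}},
\]
where $U_{\mathrm{in}}$ counts edges of $G^\prime$ with both endpoints in $Z^\cN$ and $U_{\mathrm{out}}$ counts edges with exactly one endpoint in $Z^\cN$. Since $v_0,\ldots,v_K \in Z \subset Z^\cN$, each of the $K$ forced edges lies in the $U_{\mathrm{in}}$-sum and contributes exactly $2$ to $U$. Hence, conditionally on the path, $U \overset{d}{=} 2K + \tilde U$, where $\tilde U$ is a sum of independent Bernoulli-type variables taking values in $\{0,1\}$ or $\{0,2\}$, with expectation $\E_\beta[\tilde U] \leq \E_\beta[U] \leq \mu_\beta |Z^\cN|$. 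The trivial inclusion $Z\subset Z^\cN$ also gives $|Z^\cN|\geq K+1$, a fact that will matter in the final computation.

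I would then control the tail of $\tilde U$ by a Chernoff argument analogous to \eqref{eq:exp moment bernoulli}--\eqref{eq:exp moment bernoulli prob bound}. Using $\E_\beta[e^{2W}] \leq e^{(e^2-1)\E[W]}$ for each in-edge Bernoulli $W$ and $\E_\beta[e^{W}] \leq e^{(e-1)\E[W]}$ for each out-edge Bernoulli, one obtains $\E_\beta[e^{\tilde U}] \leq \exp\bigl((e^2-1)\mu_\beta |Z^\cN|\bigr)$, and Markov's inequality yields
\[
\p_\beta\bigl(\tilde U > 20\mu_\beta |Z^\cN| - 2K\bigr) \leq \exp\bigl(-(21-e^2)\mu_\beta |Z^\cN| + 2K\bigr).
\]
Invoking $|Z^\cN|\geq K+1$ together with the universal lower bound $\mu_\beta \geq 3^d - 1 \geq 2$ (which holds because the $\infty$-norm nearest-neighbor edges are deterministically open in the self-similar model), the right-hand side can be bounded by $e^{-6\mu_\beta K}$ for all sufficiently large $K$, which gives the claim.

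The main difficulty here is constant-tracking rather than anything conceptual: one must verify that the coefficient $21-e^2 > 13$ in the Chernoff exponent, together with the lower bound $\mu_\beta\geq 2$, is strong enough to absorb both the additive $2K$ contribution from the forced path edges and still leave a $6\mu_\beta K$ gap on the right-hand side.
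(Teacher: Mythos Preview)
Your proposal is correct and follows essentially the same route as the paper: separate out the $K$ forced path edges (which contribute a deterministic $2K$ to the degree sum), observe that the remaining edge indicators are independent of the conditioning, and apply a Chernoff/Markov bound of the type \eqref{eq:exp moment bernoulli}--\eqref{eq:exp moment bernoulli prob bound} together with $|Z^{\cN}|\geq K+1$ and $\mu_\beta\geq 2$. The only cosmetic difference is that the paper first applies the crude bound $\sum_{r(v)\in Z^{\cN}}\deg(r(v))\leq 2\sum_{e\in F}\mathbbm{1}_{e\text{ open}}$ so that everything is a sum of $\{0,1\}$-Bernoullis (yielding the cleaner constant $9-e$ rather than your $21-e^2$), but the arithmetic goes through either way.
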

\begin{proof}
	The random variable $\sum_{r(v) \in Z^{\cN} } \deg\left(r(v)\right)$ is {\sl not} the sum of independent Bernoulli random variables, as edges with both endpoints in $Z^{\cN}$ contribute twice to this sum if they are open. However, if we define the edge set
	\begin{equation*}
		F=\left\{\{r(u),r(v)\}\in E^\prime : r(u) \in Z^{\cN}\right\}
	\end{equation*}
	that consists of all edges with at least one endpoint in $Z^{\cN}$, we have that
	\begin{equation*}
		\sum_{r(v) \in Z^{\cN} } \deg\left(r(v)\right) \leq 2 \sum_{e \in F} \mathbbm{1}_{e \text{ open}}.
	\end{equation*}
	We split the set $F$ into two components $F=F_1 \cup F_2$ by defining $F_1 = \bigcup_{i=1}^{K} \left\{\{r(v_{i-1}),r(v_i)\}\right\}$ and $F_2 = F\setminus F_1$. So in particular, we get that $|F_1| = K$ and that the edges in $F_2$ are independent of the event $\{r(v_0)\sim \ldots \sim r(v_K)\}$. Using this independence implies that
	\begin{align}
		& \notag \p_\beta \left( \sum_{r(v) \in Z^{\cN} } \deg\left(r(v)\right) > 20 \mu_{\beta} \left|Z^{\cN}\right| \Big| r(v_0)\sim r(v_{1}) \sim \ldots \sim r(v_K) \right) \\
		& \notag
		\leq
		\p_\beta \left( \sum_{e \in F} \mathbbm{1}_{e \text{ open}} > 10 \mu_{\beta} \left|Z^{\cN}\right| \Big| r(v_0)\sim r(v_{1}) \sim \ldots \sim r(v_K) \right) \\
		& \notag
		\leq
		\p_\beta \left( \sum_{e \in F_2} \mathbbm{1}_{e \text{ open}} > 9 \mu_{\beta} \left|Z^{\cN}\right| \Big| r(v_0)\sim r(v_{1}) \sim \ldots \sim r(v_K) \right)\\
		& \label{eq:claim}
		= 
		\p_\beta \left( \sum_{e \in F_2} \mathbbm{1}_{e \text{ open}} > 9 \mu_{\beta} \left|Z^{\cN}\right| \right).
	\end{align}
	The set $F_2$ is a subset of the set of edges with at least one endpoint in $Z^{\cN}$. So in particular, we get that
	\begin{equation*}
		\sum_{e \in F_2} \p_\beta \left(e \text{ open}\right)
		\leq
		\sum_{e \in F} \p_\beta \left(e \text{ open}\right)
		\leq
		\mu_\beta |Z^{\cN}|.
	\end{equation*}
	The random variable $\sum_{e \in F_2} \mathbbm{1}_{e \text{ open}}$ is the sum of independent Bernoulli-distributed random variables. So we can use inequality \eqref{eq:exp moment bernoulli prob bound} to bound the probability that this random variable differs from its mean by a large factor. Thus we get that
	\begin{align*}
		&\p_\beta \left( \sum_{e \in F_2} \mathbbm{1}_{e \text{ open}} > 9 \mu_{\beta} \left|Z^{\cN}\right| \right)
		\overset{\eqref{eq:exp moment bernoulli prob bound}}{\leq}
		\exp\left(e \E\left[\sum_{e \in F_2} \mathbbm{1}_{e \text{ open}} \right] - 9 \mu_{\beta} \left|Z^{\cN}\right|  \right)\\
		&
		\leq
		\exp\left(e |Z^{\cN}| \mu_{\beta} - 9 \mu_{\beta} \left|Z^{\cN}\right|  \right)
		\leq e^{-6 \mu_\beta K}.
	\end{align*}
	Inserting this into \eqref{eq:claim} finishes the proof.
\end{proof}

Using Claim \ref{claim:path degree}, we can prove Lemma \ref{lem:good paths}.

\begin{proof}[Proof of Lemma \ref{lem:good paths}]
	Define the set $Z=\left\{r(v_0),r(v_1),\ldots,r(v_K) \right\}$.
	For such a set, we add the nearest neighbors to it. Formally, we define the set
	\begin{align*}
		Z^\cN = \bigcup_{ r(v) \in Z }  \cN\left(r(v)\right)
	\end{align*}
	which is still a connected set and satisfies $K \leq |Z^\cN| \leq 3^d K$. A vertex $r(u) \in G^\prime$ can be included into the set $Z^{\cN}$ in more than one way, meaning that there can be different vertices $r(v),r(\tilde{v}) \in Z$ such that $r(u) \in \cN\left(r(v)\right)$ and $r(u) \in \cN\left(r(\tilde{v})\right)$.
	However, each vertex $r(u) \in G^\prime$ can be included into the set $Z^{\cN}$ 
	in at most $3^d$ many different ways. So in particular, we have
	\begin{align*}
		\sum_{i=0}^K \deg^{\cN} \left(r(v_i)\right) \leq 3^d \sum_{r(v) \in Z^{\cN} } \deg\left(r(v)\right) .
	\end{align*}
	Next, we iteratively define a set $\mathbb{LI} = \mathbb{LI}(Z) = \mathbb{LI}_{K-3^d-1} \subset Z$ as follows:
	\begin{enumerate}\addtocounter{enumi}{-1}
		\item Start with $\mathbb{LI}_{3^d} = \emptyset$.
		\item For $i=3^d+1,\ldots, K-3^d-1$: If 
		\begin{equation*}
			\text{$\deg^{\cN} \left(r(v_i)\right) \leq 9^d 50 \mu_{\beta}$ and $ \cN \left( r(v_i) \right)  \nsim \bigcup_{r(u) \in \mathbb{LI}_{i-1}} \cN(r(u))$,}
		\end{equation*}
		then set $\mathbb{LI}_{i} = \mathbb{LI}_{i-1} \cup \{r(v_i)\}$; else set $\mathbb{LI}_{i}= \mathbb{LI}_{i-1}$.
	\end{enumerate}
	On the event where $\overline{\deg}(Z^\cN) \leq 20 \mu_{\beta}$ (which is very likely by Claim \ref{claim:path degree}), we have
	\begin{align*}
		\sum_{i=0}^K \deg^{\cN} \left(r(v_i)\right)  \leq 3^d \sum_{r(v) \in Z^{\cN} } \deg\left(r(v)\right) \leq 3^d 20 \mu_{\beta} \left|Z^{\cN}\right| \leq 9^d 20 \mu_{\beta} K
	\end{align*}
	and thus there can be at most $\frac{K}{2}$ many vertices $r(v_i) \in Z$ with $\deg^{\cN} \left(r(v_i)\right) > 9^d 50 \mu_{\beta}$, which implies that there are at least  $\frac{K}{2}$ many vertices with $\deg^{\cN} \left(r(v_i)\right) \leq 9^d 50 \mu_\beta$. Whenever we include such a vertex in the set $\mathbb{LI}$, it can block at most $3^d \deg^{\cN} \left(r(v_i)\right) \leq 27^d 50\mu_{\beta}$ different vertices from entering the set $\mathbb{LI}$, which already implies
	\begin{equation*}
		|\mathbb{LI}| \geq \frac{K-2 \cdot 3^d -2}{2 (27^d 50 \mu_{\beta} + 1)} \geq \frac{K}{27^d 200 \mu_{\beta}}  
	\end{equation*}
	for $K$ large enough.

	Conditioned on the (neighborhood) degree of the vertex $r(w)$, and assuming that $\deg^{\cN} \left( r(w)\right) \leq 9^d 50 \mu_{\beta}$, the probability that $r(w)$ is not $\delta$-good is bounded by
	\begin{align*}
		\deg\left(r(w)\right)^2 \left(1-f_2(\delta)\right) +
		\deg^{\cN}\left( r(w) \right) \left(1-f_1(\delta)\right)
		+ (1-f_3(\delta))
		\leq 9^{2d} 5000 \mu_{\beta}^2 \left(1-f(\delta)\right) \text ,
	\end{align*}
	where $f$ was defined by $f(\delta)= \min\{f_1(\delta) , f_2(\delta) , f_3(\delta)\}$.
	Remember that we chose $\delta>0$ in \eqref{eq:deltasmall2} small enough so that
	\begin{align}\label{eq:deltasmall3}
		\left(9^{2d} 5000 \mu_{\beta}^2 \left(1-f(\delta)\right)\right)^{\frac{1}{27^d200\mu_{\beta}}} \leq e^{-5\mu_\beta } .
	\end{align}
	We now claim that the set $\mathbb{LI}$ contains at least one good vertex with high probability. Given the graph $G^\prime$, it is independent whether different vertices in $\mathbb{LI}$ are good or not, as we will argue now. For a vertex $r(u)$, it depends only on edges with at least one end in the set $\bigcup_{r(v) \in \cN\left(r(u)\right)} V_v^{N}$ whether the vertex $r(u)$ is good or not. But for different vertices $r(u), r(u^\prime) \in \mathbb{LI}$ there are no edges with one end in $\bigcup_{r(v) \in \cN\left(r(u)\right)} V_v^{N}$ and the other end in $\bigcup_{r(v) \in \cN\left(r(u^\prime)\right)} V_v^{N}$, as $\cN\left(r(u)\right) \nsim \cN\left(r(u^\prime)\right)$. Thus, it is independent whether different vertices in $\mathbb{LI}$ are good. So in particular, the probability that there is no good vertex in the set $\mathbb{LI}$ is bounded by
	\begin{align}\label{eq:tilde f}
		\left( 9^{2d} 5000 \mu_{\beta}^2 (1-f(\delta))  \right)^{|\mathbb{LI}|}
		\leq  \left( 9^{2d} 5000 \mu_{\beta}^2 (1-f(\delta))  \right)^{\frac{K}{27^d200\mu_{\beta}}} \overset{\eqref{eq:deltasmall3}}{\leq} e^{-5\mu_\beta K}
	\end{align}
	and thus the set $\mathbb{LI}$ $\big($and also the set $\{r(v_{3^d+1}),\ldots, r(v_{K-3^d-1})\}\big)$ contains at least one good vertex with very high probability. So in total, we see that
	\begin{align*}
		&  \p_\beta \left( \text{$r(v_{3^{d}}),\ldots,r(v_{K-3^d})$ are bad } \Big|  r(v_0)\sim r(v_{1}) \sim \ldots \sim r(v_K) \right)\\
		&
		\leq 
		\p_\beta \left( \text{$r(v_{3^{d}}),\ldots,r(v_{K-3^d})$ are bad } \Big| \overline{\deg}(Z^{\cN}) \leq 20 \mu_{\beta},  r(v_0) \sim \ldots \sim r(v_K) \right)\\
		&
		+
		\p_\beta \left(\overline{\deg}(Z^{\cN}) > 20 \mu_{\beta} \Big| r(v_0)\sim r(v_{1}) \sim \ldots \sim r(v_K) \right)
		\leq e^{-5 \mu_\beta K} + e^{-6 \mu_\beta K} \leq e^{-3 \mu_\beta K}
	\end{align*}
	for all $K$ large enough; the first inequality in the above line of inequalities follows from the elementary inequality $\p(A|B) \leq \p(A|B,C) + \p(C^c|B)$ that holds for all probability measures and events $A,B,C$, and the second inequality follows from \eqref{eq:tilde f} and Claim \ref{claim:path degree}.
\end{proof}

One other intermediary result that we need is that there are no points that are `exceptionally likely' to connect to the origin. For $x\in \Z^d \setminus \{\mz\}$ we write $S_{\mz,x}^k$ for the set of $(k+1)$-tuples  $\mz=v_0,v_1,\ldots,v_k=x$ that start at $\mz$, end at $x$, and that are self-avoiding, i.e., $v_i \neq v_j$ for $i\neq j$.
For such a sequence $v_0,v_1,\ldots,v_k$ we also use the notation $\overline{v}$, and we say that the path $\overline{v} = (v_0,\ldots,v_k)$ is open if all edges $\{v_{i-1},v_i\}, i=1,\ldots,k$ are open. 

\begin{lemma}\label{lem:poly bound}
	There exists a constant $C=C(d,\beta)$ such that for all $x\in \Z^d$
	\begin{equation*}
		\sum_{\overline{v}\in S_{\mz,x}^k} \p_\beta \left( \overline{v} \text{ open}\right)
		\leq C \mu_\beta^{3k} \frac{1}{\|x\|^{2d}}.
	\end{equation*}
\end{lemma}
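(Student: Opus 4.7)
The plan is to drop the self-avoidance restriction, reducing the estimate to a bound on the $k$-fold convolution of the one-edge connection function $q(u) = \p_\beta(\mz \sim u)$, and then close the bound with a single-big-jump argument. For any self-avoiding $\overline v \in S_{\mz,x}^k$ the edges $\{v_{i-1},v_i\}$ are pairwise distinct, so by independence $\p_\beta(\overline v \text{ open}) = \prod_{i=1}^k q(v_i - v_{i-1})$. Extending the sum to all (not necessarily self-avoiding) walks of length $k$ from $\mz$ to $x$ gives
\begin{equation*}
\sum_{\overline v \in S_{\mz,x}^k} \p_\beta(\overline v \text{ open}) \leq q^{*k}(x).
\end{equation*}
I also record the pointwise bound $q(u) \leq C_0/\|u\|^{2d}$ for all $u \in \Z^d \setminus \{\mz\}$, with $C_0 = C_0(d,\beta)$. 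For $\|u\|_\infty \geq 2$ this follows from $q(u) \leq \beta J(u) \leq C_d\beta/\|u\|^{2d}$ using the integral definition of $J$, and for $\|u\|_\infty = 1$ one notes $q(u)=1 \leq d^d/\|u\|^{2d}$ since $\|u\|_2 \leq \sqrt{d}$.

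To bound $q^{*k}(x)$ I use a single-big-jump decomposition. For any walk $\mz = v_0, v_1, \ldots, v_k = x$, the triangle inequality forces at least one step to satisfy $\|v_i - v_{i-1}\| \geq \|x\|/k$. Letting $i^\star$ denote the first such index and writing $u=v_{i^\star-1}$, $u'=v_{i^\star}$,
\begin{equation*}
q^{*k}(x) \leq \sum_{i^\star=1}^k \sum_{u,u' \in \Z^d} q^{*(i^\star-1)}(u)\, q(u'-u)\, \mathbbm{1}_{\{\|u'-u\|\geq \|x\|/k\}}\, q^{*(k-i^\star)}(x-u').
\end{equation*}
Applying the pointwise bound on the long edge yields $q(u'-u) \leq C_0 k^{2d}/\|x\|^{2d}$, and the identity $\sum_u q^{*m}(u) = \mu_\beta^m$ allows the remaining double sum to factor as $\mu_\beta^{i^\star-1} \mu_\beta^{k-i^\star} = \mu_\beta^{k-1}$. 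Summing over $i^\star$ produces $q^{*k}(x) \leq C_0 k^{2d+1} \mu_\beta^{k-1}/\|x\|^{2d}$.

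Finally, I absorb the polynomial factor $k^{2d+1}$. Since the $\infty$-norm nearest-neighbor edges are always open, $\mu_\beta \geq 3^d - 1 \geq 2$, so $\mu_\beta^{2k} \geq 4^k$ dominates $k^{2d+1}$ uniformly in $k$, giving a constant $C_d^\prime$ such that $k^{2d+1} \leq C_d^\prime \mu_\beta^{2k}$ for all $k \geq 1$. This yields the claimed bound $C \mu_\beta^{3k}/\|x\|^{2d}$. There is no serious obstacle in this strategy; the only subtle point is the direction of the inequality when passing from self-avoiding walks to all walks, which works precisely because on a self-avoiding walk the open probability factorizes exactly as a product over the (distinct) step edges. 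The $3k$ in the exponent is not tight but is a comfortable overestimate that allows absorbing all dimensional and $\beta$-dependent constants into $C$.
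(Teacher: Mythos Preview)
Your proof is correct and follows essentially the same approach as the paper's: both use the single-big-jump observation that some step must have length at least $\|x\|/k$, bound that edge by $C_0 k^{2d}/\|x\|^{2d}$, sum the remaining steps to get $\mu_\beta^{k-1}$, and then absorb the polynomial $k^{2d+1}$ into $\mu_\beta^{2k}$ using $\mu_\beta\geq 2$. The only cosmetic difference is that you first pass to the convolution $q^{*k}$ before decomposing, whereas the paper works directly with the self-avoiding sum and relaxes to distinct-vertex sums at the factorization step.
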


\begin{proof}
	If the path $\overline{v}=(\mz=v_0,\ldots,v_k=x)$ is of length $k$ from $\mz$ to $x$, there needs to exist $j\in \{1,\ldots,k\}$ such that $\|v_{j-1}-v_j\|\geq \frac{\|x\|}{k}$. Let $S_{\mz,x}^{k,j}$ be the set of elements  $(v_0,\ldots,v_k) \in S_{\mz,x}^k$ for which $\|v_{j-1}-v_j\|\geq \frac{\|x\|}{k}$. Thus we get that
	\begin{align}\label{eq:insert union bound}
		\sum_{\overline{v}\in S_{\mz,x}^k} \p_\beta \left( \overline{v} \text{ open}\right)
		\leq
		\sum_{j=1}^{k} \sum_{(v_0,\ldots,v_k)\in S_{\mz,x}^{k,j}} \ \prod_{i=1}^{k} \p_\beta \left( v_{i-1} \sim v_i\right).
	\end{align}
	Let $C_1 < \infty$ be a constant so that $\p_\beta(a\sim b) \leq C_1 \|a-b\|^{-2d}$ for all $a,b\in \Z^d$.
	For each of the $k$ many summands in the above sum we then have
	\begin{align*}
		& \sum_{(v_0,\ldots,v_k)\in S_{\mz,x}^{k,j}} \ \prod_{i=1}^{k} \p_\beta \left( v_{i-1} \sim v_i\right)
		\leq
		C_1 \left(\|x\|/k\right)^{-2d} 
		\sum_{(v_0,\ldots,v_k)\in S_{\mz,x}^{k,j}} \ \prod_{\substack{i=1:\\ i \neq j}}^{k} \p_\beta \left( v_{i-1} \sim v_i\right)
		\\
		& 
		\leq
		C_1 \left(\|x\|/k\right)^{-2d} 
		\sum_{\mz=v_0,v_1,\ldots,v_k=x} \ \prod_{\substack{i=1:\\i\neq j}}^{k} \p_\beta \left( v_{i-1} \sim v_i\right)
		\\
		&
		=
		C_1 \left(\|x\|/k\right)^{-2d} 
		\sum_{\mz=v_0,v_1,\ldots,v_{j-1}} \ \prod_{i=1}^{j-1} \p_\beta \left( v_{i-1} \sim v_i\right)
		\sum_{v_j,\ldots,v_{k}=x} \ \prod_{i=j+1}^{k} \p_\beta \left( v_{i-1} \sim v_i\right)
		\\
		&
		=
		C_1 \left(\|x\|/k\right)^{-2d} \mu_\beta^{j-1} \mu_\beta^{k-j}
		\leq
		C_1 \|x\|^{-2d} k^{2d} \mu_\beta^{k} 
	\end{align*}
	where the sums of the form $\sum_{\mz=v_0,v_1,\ldots,v_{\ell}}$ range over distinct values of $v_0,\ldots,v_{\ell}$. Inserting this into \eqref{eq:insert union bound} shows that
	\begin{align*}
		\sum_{\overline{v}\in S_{\mz,x}^k} \p_\beta \left( \overline{v} \text{ open}\right) \leq
		C_1 \|x\|^{-2d} k^{2d+1} \mu_\beta^{k}
		\leq
		C_1 \|x\|^{-2d} k^{3d} \mu_\beta^{k}
		\leq
		C_2 \|x\|^{-2d} \mu_\beta^{3k}
	\end{align*}
	for some constant $C_2 < \infty$. The last inequality follows from elementary analysis using that $\mu_\beta \geq 2$, as the nearest-neighbor edges in the $\infty$-norm are always open.
\end{proof}

\begin{remark}\label{remark:x to inf}
	If $k\in \N$ is fixed, then Lemma \ref{lem:poly bound} directly implies that $\p_\beta \left( D(\mz,x) \leq k \right) = \mathcal{O}(\|x\|^{-2d})$ as $x\to \infty$.
\end{remark}

With this, we are finally ready to prove Theorem \ref{theo:lower tail}. The proof of the theorem is split up into two different parts, depending on which of the two inequalities in \eqref{eq:lower tail} we want to prove. In the first part, we prove that for all $d$ and $\beta> 0$, there exists a constant $c>0$ such that for all $x\in \Z^d\setminus\{\mz\}$ and all $\eps \in (0,1)$ with $\eps \|x\|^{\theta(\beta,d)} \geq 1$ one has
\begin{equation}\label{eq:lower tail 1}
	c \eps^{\frac{2d}{\theta}} \leq
	\p_\beta \left( \frac{D(\mz,x)}{\|x\|^{\theta}} \leq \eps  \right) .
\end{equation}
In the second part, we prove that for all $d$ and $\beta> 0$ there exists a constant $C<\infty$ such that for all $x\in \Z^d\setminus\{\mz\}$ and all $\eps \in (0,1)$ with $\eps \|x\|^{\theta(\beta,d)} \geq 1$, one has
\begin{equation}\label{eq:lower tail 2}
	\p_\beta \left( \frac{D(\mz,x)}{\|x\|^{\theta}} \leq \eps  \right) 
	\leq C \eps^{\frac{2d}{\theta}}.
\end{equation}
Together, this implies that Theorem \ref{theo:lower tail} holds.

\begin{proof}[Proof of \eqref{eq:lower tail 1}]
	Using that $\dia \left(B_n(\mz)\right) \approx_P n^\theta$, one gets that there exists a constant $\bar{c}=\bar{c}(\beta,d)$ such that for all $x\in \Z^d \setminus \mz$
	\begin{equation*}
		\p_\beta \left(\dia \left(B_{\bar{c}\eps^{1/\theta} \|x\| } (\mz)\right) \leq \lfloor \tfrac{1}{3} \eps \|x\|^\theta \rfloor  \right) > 0.5 .
	\end{equation*}
	We now consider the event $\cA$ defined by
	\begin{align*}
		\cA 
		=&
		\left\{\dia \left(B_{\bar{c}\eps^{1/\theta} \|x\| } (\mz)\right) \leq \lfloor \tfrac{1}{3} \eps \|x\|^\theta \rfloor  \right\} 
		\cap 
		\left\{\dia \left(B_{\bar{c}\eps^{1/\theta} \|x\| } (x)\right) \leq \lfloor \tfrac{1}{3} \eps \|x\|^\theta \rfloor \right\}\\
		&
		\cap 
		\left\{ B_{\bar{c}\eps^{1/\theta} \|x\| } (\mz) \sim B_{\bar{c}\eps^{1/\theta} \|x\| } (x) \right\} .
	\end{align*}
	As all three events in the definition of $\cA$ are increasing, we get by the FKG-inequality \cite[Section 1.3]{heydenreich2017progress} that
	\begin{align}\label{eq:one minus exp}
		\notag \p_\beta \left(\cA\right) & \geq \frac{1}{4} \p_\beta \left( B_{\bar{c}\eps^{1/\theta} \|x\|}  (\mz) \sim B_{\bar{c}\eps^{1/\theta} \|x\| }  (x) \right)\\
		&
		= \frac{1}{4} \left(1 - \exp \left( - \beta \sum_{a \in  B_{\bar{c}\eps^{1/\theta} \|x\|}  (\mz)} \sum_{b \in  B_{\bar{c}\eps^{1/\theta} \|x\|}  (x)} J(a-b) \right)\right)
	\end{align}
	The two sets $B_{\bar{c}\eps^{1/\theta} \|x\|}  (\mz), B_{\bar{c}\eps^{1/\theta} \|x\| }  (x)$ have a size of order $\eps^{d/\theta} \|x\|^d$ each, and points $a \in B_{\bar{c}\eps^{1/\theta} \|x\|}  (\mz)$ and $b \in B_{\bar{c}\eps^{1/\theta} \|x\|}  (x)$ have a Euclidean distance of order  $\|x\|$. So in particular, there exists a constant $c^\prime$ such that
	\begin{equation*}
		\sum_{a \in  B_{\bar{c}\eps^{1/\theta} \|x\|}  (\mz)} \ \sum_{b \in  B_{\bar{c}\eps^{1/\theta} \|x\|}  (x)} J(a-b) 
		\geq
		c^\prime \eps^{d/\theta} \|x\|^d \eps^{d/\theta} \|x\|^d  \frac{1}{\|x\|^{2d}} =  c^\prime \eps^{2d/\theta}.
	\end{equation*}
	Inserting this into $\eqref{eq:one minus exp}$ shows that $\p_\beta \left(\cA\right) \geq c \eps^{2d/\theta}$ for some small enough $c>0$. Whenever the event $\cA$ holds one can find a path between $\mz$ and $x$ that uses an open edge $\{a,b\}$ with $a \in B_{\bar{c}\eps^{1/\theta} \|x\|}  (\mz), b \in B_{\bar{c}\eps^{1/\theta} \|x\|} (x)$. In particular, one gets that
	\begin{equation*}
		D(\mz,x) \leq \dia \left(B_{\bar{c}\eps^{1/\theta} \|x\| } (\mz)\right) + 1 + \dia \left(B_{\bar{c}\eps^{1/\theta} \|x\| } (x)\right)
		\leq 2 \lfloor \tfrac{1}{3} \eps \|x\|^\theta \rfloor + 1
		\leq  \eps \|x\|^\theta .
	\end{equation*}
	Here, the second inequality follows from the definition of $\cA$ and the final inequality follows from the assumption $\eps \|x\|^\theta \geq 1$ $\big($note that the final inequality holds both for $\eps \|x\|^\theta \in \left[1,3\right)$ and for $\eps \|x\|^\theta \in \left[3,\infty\right) \big)$. So in total, we see that
	\begin{equation*}
		\p_\beta \left( \frac{D(\mz,x)}{\|x\|^{\theta}} \leq \eps  \right) 
		\geq
		\p_\beta \left( \cA \right) \geq c \eps^{\frac{2d}{\theta}}
	\end{equation*}
	which finishes the proof of \eqref{eq:lower tail 1}.
\end{proof}

\begin{proof}[Proof of \eqref{eq:lower tail 2}]
	The proof uses a renormalization argument. For $\eps \in(0,1)$ and $x\in \Z^d$ with $\eps \|x\|^\theta \geq 1$ we define
	\begin{equation*}
		M=M(\eps, x) \coloneqq \Big \lceil (\eps/\delta)^{\frac{1}{\theta}} \|x\| \Big \rceil
	\end{equation*}
	where $\delta > 0$ is the parameter chosen in \eqref{eq:deltasmall}, \eqref{eq:deltasmall2}.
	We define the renormalized graph $G^\prime = (V^\prime, E^\prime)$ by contracting the boxes $\left(V_u^M\right)_{u\in \Z^d}$ into vertices $\left(r(u)\right)_{u\in \Z^d}$. Let $K^\prime \geq 2\cdot 3^d$ and $N^\prime$ be large enough so that the result of Lemma \ref{lem:good paths} holds for all $K \geq K^\prime, N \geq N^\prime$.
	
	If $M\leq N^\prime$, then $\eps \|x\|^\theta = \Theta(1)$ and thus there exist constants $C, C^\prime$ such that
	\begin{equation*}
		\p_\beta \left(D(\mz,x) \leq \eps \|x\|^\theta\right) 
		\leq C \|x\|^{-2d} \leq C^\prime \eps^{\frac{2d}{\theta}},
	\end{equation*}
	where the first inequality holds by Remark \ref{remark:x to inf}.
	
	Thus, let us assume from here on that $M \geq N^\prime$.
	Assume that a path $P\subset \Z^d$ crosses a good block $V_u^M$ in the sense that it starts outside of $\bigcup_{r(v) \in \cN(r(u))} V_v^M$, then touches $V_u^M$, and then leaves the set $\bigcup_{r(v) \in \cN(r(u))} V_v^M$. As the block $V_u^M$ was assumed to be good, the path $P$ needs to have a length of at least $\delta M^\theta \geq \eps \|x\|^\theta$. For $x\in \Z^d$, let $\overline{x}$ be the unique value $\overline{x} \in \Z^d$ for which $x \in V_{\overline{x}}^M$. Thus we see that the event $\left\{ D(\mz,x) \leq \eps \|x\|^\theta \right\}$ implies (at least) one of the two following events holds.
	\begin{align}
		\label{eq:event of interest1} & D_{G^\prime}(r(\mz),r(\overline{x})) \leq K^\prime, \ \ \ \ \ \ \text{ or there exits $K\geq K^\prime$ such that}\\
		&
		\label{eq:event of interest2} \text{$\exists P = \left(r(\mz),r(v_1), \ldots, r(v_{K}) = r(\overline{x})\right)$ open in $G^\prime$ s.t. $r(v_{3^d}),\ldots,r(v_{K-3^d})$ are bad}.
	\end{align}
	By the definition $\overline{x}$ we have that $\|\overline{x}\| \approx \eps^{-1/\theta}$ and thus
	\begin{equation}\label{eq:event of interest1 bound}
		\p_\beta \left( D_{G^\prime}(r(\mz),r(\overline{x})) \leq K^\prime \right) 
		= 
		\p_\beta \left( D (\mz,\overline{x}) \leq K^\prime \right) = \mathcal{O}\left(\|\overline{x}\|^{-2d}\right) = \mathcal{O}\left(\eps^{2d/\theta}\right)
	\end{equation}
	by Remark \ref{remark:x to inf}. In order to estimate the probability of the event in \eqref{eq:event of interest2} we use a union bound over the set of possible choices of vertices $v_1,\ldots,v_{K-1},\overline{x}$ and Lemma \ref{lem:good paths}. In analogy to Lemma \ref{lem:poly bound}, we write $S_{\mz,\overline{x}}^{\prime K}$ for the set of possible paths $r(v_0)=r(\mz), r(v_1),\ldots, r(v_{K-1})$, $ r(v_K) = r(\overline{x})$ from $r(\mz)$ to $r(\overline{x})$ of $K$ steps in the graph $G^\prime$. We write $\widetilde{r(v)}$ for a sequence of distinct vertices $\left(r(v_0)=r(\mz), r(v_1),\ldots, r(v_{K-1}), r(v_K) = r(\overline{x})\right)$, and we say that $\widetilde{r(v)}$ is open if $r(v_{i-1}) \sim r(v_i)$ for $i=1,\ldots,K$. Using this notation we get that
	\begin{align*}
		& \sum_{\widetilde{r(v)}\in S_{\mz,\overline{x}}^{\prime K} }
		\p_\beta \left( r(v_{3d}) ,\ldots, r(v_{K-3^d}) \text{ bad and } \widetilde{r(v)} \text{ open}   \right) \\
		&
		= \sum_{\widetilde{r(v)}\in S_{\mz,\overline{x}}^{\prime K} }
		\p_\beta \left( r(v_{3d}) ,\ldots, r(v_{K-3^d}) \text{ bad} \ \big| \  \widetilde{r(v)} \text{ open}   \right) \p_\beta \left(\widetilde{r(v)} \text{ open}\right)\\
		&
		\leq \sum_{\widetilde{r(v)}\in S_{\mz,\overline{x}}^{\prime K} }
		e^{-3\mu_\beta K} \p_\beta \left(\widetilde{r(v)} \text{ open}\right)
		\leq 
		\frac{C}{\|\overline{x}\|^{2d}} \mu_\beta^{3K}
		e^{-3\mu_\beta K} \leq \frac{C}{\|\overline{x}\|^{2d}} 2^{-K}
	\end{align*}
	where the first inequality follows from Lemma \ref{lem:good paths} and the second inequality follows from Lemma \ref{lem:poly bound}. Summing this over all possible values $K\geq K^\prime$ shows that the event in \eqref{eq:event of interest2} has a probability of at most $\tfrac{2C}{\|\overline{x}\|^{2d}}$. By the definition of the events in \eqref{eq:event of interest1} and \eqref{eq:event of interest2} we thus get that
	\begin{align*}
		& \p_\beta \left( D(\mz,x) \leq \eps \|x\|^\theta \right) \leq \p_\beta \left( D_{G^\prime}(r(\mz),r(\overline{x})) \leq K^\prime \right) + \sum_{K=K^\prime}^{\infty} \frac{C}{\|\overline{x}\|^{2d}} 2^{-K} \\
		&
		=
		\mathcal{O}\left(\|\overline{x}\|^{-2d}\right) = \mathcal{O}\left(\eps^{\frac{2d}{\theta}}\right)
	\end{align*}
	which finishes the proof.
\end{proof}

\subsection{Proof of Theorem \ref{theo:hausdorff dimension}}

Finally, we can prove Theorem \ref{theo:hausdorff dimension}. Our proof uses an argument that estimates the maximal "distortion" of the metric space $(X,\hat{D})$ compared to the metric space $(X,\|\cdot \|)$. A similar argument was used for the upper bound on the Hausdorff dimension of the metric space in \cite[Theorem 1.16]{ding2023uniqueness}. The difference is that we use Proposition \ref{propo:main} as an input, whereas the proof in \cite{ding2023uniqueness} takes the stretched-exponential moment bound \eqref{eq:stretched upper bound} as an input. These different inputs work in opposite ways: whereas \eqref{eq:stretched upper bound} allows to prove upper bounds on $\dim_{\cH}^\beta(X)$, Proposition \ref{propo:main} allows to prove lower bounds on $\dim_{\cH}^\beta(X)$.

\begin{proof}[Proof of Theorem \ref{theo:hausdorff dimension}]
	By the countable stability of the Hausdorff dimension we can assume that $X$ is contained in a compact subset of $\R^d$, and by the translation invariance we can assume that $X\subseteq \left[0,1\right]^d$. For two sets $A,B\subseteq \left[0,1\right]^d$, define $\hat{D}(A,B)=\inf \left\{\hat{D}(x,y):x\in A, y\in B\right\}$.
	For $u \in \{0\ldots,m\}^d$, define
	\begin{equation*}
		Q_m^u = \left|\left\{v \in \Z^d : \hat{D}\left( \frac{1}{m} u + \left[0,\frac{1}{m} \right)^d, \frac{1}{m} v + \left[0,\frac{1}{m} \right)^d \right) < m^{-\theta} \right\} \right| .
	\end{equation*}
	We are interested in the event $\left\{Q_m^u > M \log (m) \right\}$ for $m=2^l$ for some $l\in \N$. For $u\in \Z^d$ and $m\in \N$, define the box $W_u^{1/m}\subset \R^d$ by
	\begin{equation*}
		W_u^{1/m} = \frac{1}{m} u + \left[0,\frac{1}{m} \right)^d.
	\end{equation*}
	As $\hat{D}_n(\cdot, \cdot) = \hat{a}_n^{-1} D \left( \lfloor n \cdot \rfloor, \lfloor n \cdot \rfloor \right)$ converges to $\hat{D}$  in law with respect to the topology
	of local uniform convergence on $\R^d \times \R^d$, we get that
	\begin{align*}
		& \p \left(  Q_m^u > M \log (m) \right) \\
		&
		= 
		\lim_{n\to \infty} \p_\beta \left(\left|\left\{v \in \Z^d : \hat{D}_{2^n} \left( W_u^{1/m}, W_v^{1/m} \right) < m^{-\theta} \right\} \right| > M \log(m)\right)\\
		&
		= 
		\lim_{n\to \infty} \p_\beta \left(\left|\left\{v \in \Z^d : \hat{a}_{2^n}^{-1} D \left( V_u^{m^{-1}2^n}, V_v^{m^{-1}2^n} \right) < m^{-\theta} \right\} \right| > M \log(m)\right)\\
		&
		= 
		\lim_{n\to \infty} \p_\beta \left(\left|\left\{v \in \Z^d :  D \left( V_u^{m^{-1}2^n}, V_v^{m^{-1}2^n} \right) < \hat{a}_{2^n} m^{-\theta} \right\} \right| > M \log(m)\right). 
	\end{align*}
	Now we use that $a_{2^n} = \Theta \left(2^{\theta n}\right)$ and Proposition \ref{propo:main} to get that there exists a constant $c>0$ such that
	\begin{align*}
		& \p \left(  Q_m^u > M \log (m) \right) \\
		& 
		\leq
		\lim_{n\to \infty} \p_\beta \left(\left|\left\{v \in \Z^d :  D \left( V_u^{m^{-1}2^n}, V_v^{m^{-1}2^n} \right) < c \left(m^{-1}2^n\right)^\theta \right\} \right| > M \log(m)\right)\\
		&
		\leq e^{-c^\prime M \log(m)} \leq m^{-3d}
	\end{align*}
	for some $c^\prime >0$ and all $M$ large enough. Using a union bound over all possible values of $u \in \{0,\ldots, m\}^d$ shows that for $M,m$ large enough 
	\begin{equation*}
		\p \left( \exists u \in \{0,\ldots,m\}^d:  Q_m^u > M \log (m) \right) \leq m^{-d}
	\end{equation*}
	and using a Borel-Cantelli argument shows that there exists almost surely some $h \in \N$ such that for all $l\geq h$, and all $u\in \{0,\ldots,2^l\}^d$ one has $Q_{2^l}^u \leq M l$. Let $h\in \N$ be this finite number for the rest of the proof.
	
	With this result, we are ready to compare the distortions between the metric spaces $\left(X,\hat{D}\right)$ and $\left(X,\|\cdot\|_\infty\right)$. Fix an integer $M$ so that the previous results hold. Let $y \in \left[0,1\right]^d$ be a point, let $r \leq \frac{1}{2} 2^{-l\theta}$ with $l\geq h$, and say that  $y \in W_v^{2^{-l}}$ with $v\in \{0,\ldots,2^l\}^d$. Then
	\begin{align*}
		B_r(y;\hat{D})  \subseteq B_{\frac{1}{2} 2^{-l\theta}} (W_v^{2^{-l}}, \hat{D}) \subseteq \bigcup_{i=1}^{M l} W_{u_i}^{2^{-l}}
	\end{align*}
	for some $u_1,\ldots, u_{Ml} \in \Z^d$.
	So we see that we can cover all sets of the form $B_r(y,\hat{D})$ with $r \leq \frac{1}{2} 2^{-l\theta}$ and $y\in \left[0,1\right)^d$ with at most $Ml$ many sets of the form $W_{u_i}^{2^{-l}}$. Now let $X\subset \left[0,1\right]^d$ be a set and assume that $\dim_{\cH}^{\beta}(X) < \eta$. Then we get that
	\begin{equation*}
		\inf \left\{ \sum_{i=1}^{\infty} r_i^\eta \ \Big| \ X \subseteq \bigcup_{i=1}^{\infty} B_{r_i}\left(y_i;\hat{D}\right), y_1,y_2,\ldots \in \left[0,1\right]^d \right\}	 = 0 .
	\end{equation*}
	Assume that $\left(B_{r_i}\left(y_i;\hat{D}\right)\right)_{i\in \N}$ is a covering of $X$ with $r_i \leq 2^{-h\theta-1}$ for all $i \in \N$. For each set $B_{r_i}\left(y_i;\hat{D}\right)$ with $r_i \in \left( 2^{-l\theta-2}, 2^{-l\theta-1} \right]$ there exists a covering of $B_{r_i}\left(y_i;\hat{D}\right)$ with at most $Ml$ many sets of the form $W_u^{2^{-l}}$, i.e.,
	\begin{equation*}
		B_{r_i}\left(y_i;\hat{D}\right) \subseteq \bigcup_{j=1}^{Ml} W_{u_j^i}^{2^{-l}}.
	\end{equation*}
	Write $L_l \subset \N$ for the set of indices $i$ with $r_i \in \left( 2^{-l\theta-2}, 2^{-l\theta-1} \right]$. Thus we get that
	\begin{align*}
		X\subseteq \bigcup_{i=1}^{\infty} B_{r_i}\left(y_i;\hat{D}\right) = \bigcup_{l=h}^{\infty} \bigcup_{i\in L_l} B_{r_i}\left(y_i;\hat{D}\right)
		\subseteq
		\bigcup_{l=h}^{\infty} \bigcup_{i\in L_l} \bigcup_{j=1}^{Ml} W_{u_j^i}^{2^{-l}}.
	\end{align*}
	Each set of the form $W_{u_j^i}^{2^{-l}}$ is covered by a ball of radius $2^{-l}$ in the $\|\cdot\|_\infty$-norm centered at $u_j^i$ and thus we get that
	\begin{align*}
		X\subseteq \bigcup_{i=1}^{\infty} B_{r_i}\left(y_i;\hat{D}\right) = \bigcup_{l=h}^{\infty} \bigcup_{i\in L_l} B_{r_i}\left(y_i;\hat{D}\right)
		\subseteq
		\bigcup_{l=h}^{\infty} \bigcup_{i\in L_l} \bigcup_{j=1}^{Ml} B_{2^{-l}} \left(u_j^i;\|\cdot\|_\infty\right)
	\end{align*}
	and thus the collection of balls $\left\{ B_{2^{-l}} \left(u_j^i;\|\cdot\|_\infty\right) : l\geq h, i \in L_l , j \in \{1,\ldots,Ml\} \right\}$ is a cover of $X$.
	For a ball  $B_{r}\left(x;\|\cdot\|_\infty\right)$, we define 
	\begin{equation*}
		\rad\left( B_{r} \left(x ; \|\cdot\|_\infty \right) \right) \coloneqq r
	\end{equation*}
	as the radius of the ball. 
	Let $\gamma = \theta \eta$ and let $\eps > 0$. Then
	\begin{align*}
		&\sum_{l=h}^{\infty} \sum_{i\in L_l} \sum_{j=1}^{Ml} \rad \left( B_{2^{-l}} \left(u_j^i;\|\cdot\|_\infty\right) \right)^{\gamma + \eps}
		=
		\sum_{l=h}^{\infty} \sum_{i\in L_l} Ml 2^{-l\gamma} 2^{-l\eps} \\
		&
		\leq
		\sup_{l \geq h} \left\{Ml 2^{-l\eps} \right\}
		\sum_{l=h}^{\infty} \sum_{i\in L_l}  2^{-l\theta \eta}
		\leq
		4^\eta \sup_{l \geq h} \left\{Ml 2^{-l\eps} \right\}
		\sum_{l=h}^{\infty} \sum_{i\in L_l}  r_i^\eta\\
		&
		\leq
		4^\eta \sup_{l \geq h} \left\{Ml 2^{-l\eps}\right\} \sum_{i=1}^{\infty}  r_i^\eta.
	\end{align*}
	The infimum over all possible covers $\left(B_{r_i}\left(y_i;\hat{D}\right)\right)_{i\in \N}$ of $X$ in the last line of the above expression is $0$, as $\eta > \dim_{\cH}^\beta(X)$. Thus also the left-hand side can be arbitrarily small, which implies that $\dim_{\cH}^{0}(X) \leq \gamma + \eps$. As $\eps > 0$ was arbitrary, this implies that
	\begin{equation*}
		\dim_{\cH}^{0}(X) \leq \gamma = \theta \eta,
	\end{equation*}
	and as $\eta > \dim_{\cH}^\beta(X)$ was arbitrary, we finally get that
	\begin{equation*}
		\dim_{\cH}^{0}(X) \leq \theta \dim_{\cH}^{\beta}(X) .
	\end{equation*}
\end{proof}

\textbf{Acknowledgments.}
	The author would like to thank the anonymous referee for their helpful comments and corrections.

\end{document}